\DeclareMathOperator{\ck}{\mathscr{C}^{2k}_{\textup{ie}}(M, \mathcal{D})}
\DeclareMathOperator{\ckk}{\mathscr{C}^{2k+2}_{\textup{ie}}(M, \mathcal{D})}
\numberwithin{equation}{section}
\begin{document}


\title[The biharmonic heat operator on edge manifolds]
{The biharmonic heat operator on edge manifolds and non-linear fourth order equations}

\author{Boris Vertman}
\address{Mathematisches Institut,
Universit\"at Bonn,
53115 Bonn,
Germany}
\email{vertman@math.uni-bonn.de}
\urladdr{www.math.uni-bonn.de/people/vertman}

\subjclass[2000]{53C44; 58J35; 35K08}
\date{\today}

\begin{abstract}
{We construct the biharmonic heat kernel for 
a suitable self-adjoint extension of the bi-Laplacian on
a manifold with incomplete edge singularities. We employ a microlocal 
description of the biharmonic heat kernel to establish mapping properties of the corresponding biharmonic heat operator
on certain Banach spaces. This yields short time existence for a class of semi-linear equations of fourth order,
including for example the Cahn-Hilliard equation. We also obtain asymptotics of the solutions near the edge singularity.}
\end{abstract}

\maketitle

\tableofcontents

\section{Introduction}\label{intro}

In this paper we provide a microlocal construction of the biharmonic heat kernel for a
self-adjoint extension of the Laplace operator on a manifold with incomplete edge singularities.
Such manifolds include spaces with isolated conical singularities, more precisely
open manifolds $(M,g)$ with a decomposition $M=K\cup_N \mathscr{U}$, where 
$K$ is a compact manifold with boundary $N$, $(N,g^N)$ is a closed Riemannian manifold, 
$\mathscr{U}=(0,1]\times N$ and 
$$
g\restriction \mathscr{U} = dx^2 \oplus x^2 g^N, \ x \in (0,1].
$$
While the heat kernel for the Laplace operator on edge manifolds has been studied extensively
before, compare for example joint work with Mazzeo \cite{MazVer:ATM}, Bahuaud \cite{BahVer:YFO}, 
Bahuaud and Dryden \cite{BDV:THE}, as well as \cite{Ver-Mooers}; the present work seems
to be the first step towards a microlocal analysis of the bi-Laplacian, its heat kernel and 
associated non-linear partial differential equations 
of fourth order on edge spaces. In the non-singular setting cf. Lamm \cite{Lam01}.
\medskip

Our construction of the biharmonic heat kernel yields a precise understanding of its 
asymptotic properties, which in turn allows to study the mapping properties 
of the corresponding biharmonic heat operator. In the present paper we concentrate on
the mapping properties with respect to certain Banach spaces that yield 
short time existence for some semi-linear equations of fourth order, in analogy to 
\cite{JefLoy:RSH} and \cite{BDV:THE}. \medskip

We point out that several other interesting open questions are delegated to future research and not touched upon here.
These include mapping properties of the biharmonic heat kernel with respect to certain
H\"older spaces and applications to short time existence of quasi-linear equations, as in 
\cite{BahVer:YFO}. Further questions are concerned with elliptic boundary value problems
for the bi-Laplacian and the corresponding biharmonic heat trace asymptotics, as in \cite{Ver-Mooers}.
\medskip

Our interest in non-linear fourth order equations of parabolic type stems from recent 
results on e.g. the Cahn-Hilliard equation on spaces with isolated conical singularities 
by Roidos and Schrohe \cite{RoSc12}, as well as results on higher order geometric flows on compact manifolds,
generated by powers of the Laplacian applied to the Ricci tensor or by the ambient obstruction tensor, introduced
by Fefferman and Graham \cite{FG85}, see Bahuaud and Helliwell \cite{BahDyl}.
\medskip

The Cahn-Hilliard equation was proposed by Cahn and Hilliard in \cite{Cah61}, \cite{CaHi58} 
as a simple model of the phase separation process, where at a fixed temperature the two components 
of a binary fluid spontaneously separate and form domains that are pure in each component.
Let $\Delta$ denote the Laplace Beltrami operator. Then the Cahn-Hilliard equation 
may be stated in the following form
\begin{equation*}
\partial_t u + \Delta^2 u + \Delta (u-u^3)) = 0,\ u(0)=u_0.
\end{equation*}

Global existence for solutions to the Cahn-Hilliard equation has been established by 
Elliott and Songmu \cite{ElSo86}, and Caffarelli and Muler \cite{CaMu95}. In the setup of singular manifolds however, 
there is still a question of asymptotics of solutions at the singular strata. This aspect has been studied 
by the recent work of Roidos and Schrohe \cite{RoSc12} in the context of manifolds with isolated conical 
singularities, which has partly motivated the present discussion here. Using the notion of maximal regularity, 
they establish short time existence of solutions to the Cahn-Hilliard equation in certain weighted Mellin-Sobolev spaces which then yields regularity and asymptotics of solutions near the conical point. \medskip

In this paper we study the semi-linear parabolic equations of fourth order in the geometric setup of spaces with incomplete 
edges, which generalizes the notion of isolated conical singularities. Our method is different from \cite{RoSc12}
and, as emphasized above, uses the microlocal construction of the heat kernel for the bi-Laplacian. \medskip

Another recent example of higher order geometric evolution equations has been studied by 
Bahuaud and Helliwell \cite{BahDyl}. The authors consider geometric flows by powers of the Laplacian
applied to the Ricci tensor or generated by the ambient obstruction tensor.
The ambient obstruction tensor was introduced by Charles Fefferman and Robin Graham \cite{FG85}
as the obstruction to a formal expansion of an asymptotically hyperbolic Einstein metric with 
a given conformal infinity in dimension $n + 1$. When $n = 4$, the ambient obstruction tensor is the Bach tensor.
\medskip

In both instances the geometric flows admit a strongly parabolic linearization after some de Turck like 
adjustment by the Lie derivative of the metric with respect to a suitable vector field. \medskip 

The microlocal analysis of the biharmonic heat kernel on edge spaces, 
presented here, allows for derivation of 
Schauder estimates with respect to certain H\"older spaces and 
ultimately leads to short time existence results 
for fourth order PDE's, including the geometric flows studied 
in \cite{BahDyl} in the setting of singular spaces. 
This will be the subject of forthcoming analysis.\medskip

We also point out that our approach is not limited to squares of Laplacians on functions, but yields 
similar results for general powers of Hodge Laplacians on differential forms along the same lines.
\bigskip

\emph{Acknowledgements}
The author would like to thank Elmar Schrohe, Rafe Mazzeo and 
Eric Bahuaud for insightful discussions and gratefull acknowledges 
the support by the Hausdorff Research Institute at the University of Bonn.

\section{Preliminaries and statement of the main results}

In this section we introduce the notion of manifolds with edge singularities, specify
a self-adjoint extension of the bi-Laplacian and state our main results.

\subsection{Manifolds with incomplete edge singularities}\label{edge-section}

We introduce the fundamental geometric aspects of spaces with incomplete edge singularities,
as described in detail in \cite{Maz:ETD}, compare also \cite{MazVer:ATM}. \medskip

Let $\overline{M}$ 
be a compact stratified space with its open interior $M$ as a single top-dimensional 
stratum, and a single lower dimensional stratum $B$, which is a smooth closed manifold by definition
of stratified spaces. The singular stratum $B$ admits an open neighborhood $U\subset \overline{M}$
and a radial function $x:U\cap M \to \R$, such that $U\cap M$ is a smooth fibre
bundle over $B$ with fibre $\mathscr{C}(F)=(0,1)\times F$, a finite open cone over a
compact smooth manifold $F$. The restriction of $x$ to each fibre defines a radial function of that cone.
\medskip

The singular stratum $B$ in $\overline{M}$ may be \emph{resolved} and defines 
a compact manifold $\widetilde{M}$ with boundary $\partial M$, 
where $\partial M$ is the total space of a fibration $\phi: \partial M \to B$ with the fibre $F$. The resolution 
process is described in detail for instance in \cite{Maz:ETD}. The neighborhood $U$ lifts to a collar neighborhood 
$\U$ of the boundary, which is again a smooth fibration over $B$ with fibre $[0,1)\times F$, a 
cylinder with the radial function $x$.

\begin{defn}\label{d-edge}
A Riemannian manifold $(M,g)$ with an incomplete edge singularity at $B$ 
is the open stratum of a stratified space with a single lower dimensional stratum $B$, 
and the Riemannian metric $g$, such that $g=g_0+h$ over $\U$, where $|h|_{g_0}=O(x)$ as $x \to  0$ and
$$g_0\restriction \U\backslash \partial M=dx^2+x^2 g^F+\phi^*g^B,$$
with $g^B$ being a Riemannian metric on $B$, and
$g^F$ a symmetric 2-tensor on the fibration $\partial M$ which restricts to a 
Riemannian metric on each fibre $F$.
\end{defn}

We set $m=\dim M, b=\dim B$ and $f=\dim F$. Clearly, $m=1+b+f$. 
We assume henceforth $f= \dim F \geq 1$. Otherwise $M$
reduces to a compact manifold with boundary, where our discussion below is no longer applicable.
\medskip

Similarly to other discussions in the singular edge setup, see 
\cite{Alb:RIT}, \cite{BDV:THE},\cite{BahVer:YFO} and \cite{MazVer:ATM}, we additionally 
require $\phi: (\partial M, g^F + \phi^*g^B) \to (B, g^B)$ to be a Riemannian submersion. 
If $p\in \partial M$, then the tangent bundle $T_p\partial M$ splits into vertical and horizontal subspaces as 
$T^V_p \partial M \oplus T^H_p \partial M$, where $T^V_p\partial M$ is the tangent space to the fibre of 
$\phi$ through $p$ and $T^H_p \partial M$ is the annihilator of the subbundle 
$T^V_p\partial M \lrcorner g^F \subset T^*\partial M$ ($\lrcorner$ meaning contraction).  
The requirement for $\phi$ to be a Riemannian submersion is the condition that the restriction of the 
tensor $g^F$ to $T^H_p \partial M$ vanishes. \medskip

We summarize the necessary assumptions on $g$
in the following definition.

\begin{defn}\label{def-admissible}
Let $(M,g)$ be a Riemannian manifold with an incomplete edge singularity. 
The Riemannian metric $g=g_0+h$ is said to be admissible if 
\begin{enumerate}
\item the fibration $\phi: (\partial M, g^F + \phi^*g^B) \to (B, g^B)$ is a Riemannian submersion,
\item the Laplace Beltrami operators $\Delta_{F, y}$ 
associated to $(F,g^F |_{\phi^{-1}(y)})$ \\ for any $y\in B$ are isospectral. 
\item the lowest non-zero eigenvalue $\lambda_0 > 0$ of the 
Laplace Beltrami \\ operators $\Delta_{F, y}$ satisfies $\lambda_0 > \dim F$.
\item $h$ vanishes to second order at $x=0$, i.e. $|h|_{g_0}=O(x^2)$ as $x\to 0$.
\end{enumerate}
\end{defn} 

The reasons behind the feasibility assumptions are as follows. 
Let $y=(y_1,...,y_{b})$ be the local coordinates on 
$B$ lifted to $\partial M$ and then extended inwards. 
Let $z=(z_1,...,z_f)$ restrict to local coordinates on $F$ 
along each fibre. Then $(x,y,z)$ are the local coordinates on $M$ near the boundary. Cons?ider the Laplace Beltrami operator $\Delta$ associated to $(M,g)$ and its normal operator 
$N(x^2\Delta)_{y_0}$, defined as the limiting operator with respect to the local family of dilatations 
$(x,y,z) \to (\lambda x, \lambda (y-y_0), z)$ and acting on functions 
on the model edge $\R^+_s \times F \times \R^b_u$. Under the first admissibility assumption, 
$N(x^2\Delta)_{y_0}$ is naturally identified with $s^2$ times the Laplace Beltrami operator on 
the model edge $\R^+_s \times F \times \R^b_u$ with incomplete edge metric $g_{\textup{ie}} = ds^2 + s^2 g^F + |du|^2$. This is key for constructing the initial crude approximation of 
the biharmonic heat kernel. 
\medskip

The second condition on isospectrality is severe, but has to be 
imposed to ensure polyhomogeneity of the associated heat kernels 
when lifted to the corresponding parabolic blowup space. More precisely 
we actually only need that the eigenvalues of the Laplacians on fibres 
are constant in a fixed range $[0,1]$, though we still make the stronger assumption 
for a clear and convenient representation. \medskip

The reasons behind the last two admissibility assumptions are 
of technical rather than geometric nature and somewhat 
less straightforward to explain. However we point out that condition 
$\lambda_0 > \dim F$ is easily satisfied by a rescaling of $g^F$. 
Condition $|h|_{g_0}=O(x^2)$ in particular 
holds for \emph{even} metrics which depend on $x^2$ instead of $x$. 
Altogether the admissibility assumptions yield precise information 
on the heat kernel expansion, which is then used in 
Proposition \ref{heat}. \medskip

\subsection{Edge vector fields and Banach spaces}

An important ingredient in the analysis of singular edge spaces is the vector space $\V$ 
of \emph{edge} vector fields smooth in the interior of $\widetilde{M}$ and tangent at the boundary $\partial M$ to the fibres of the fibration. 
This space $\V$ is closed under the ordinary Lie bracket of vector fields, hence defines a Lie algebra. Its 
description in local coordinates is as follows. Consider the local coordinates $(x,y,z)$ on $\overline{M}$ near the boundary.
Then the edge vector fields $\V$ are locally generated by 
\[
\left\{x\partial_x, x\partial_{y_1}, \dots, x\partial_{y_b}, 
\partial_{z_1},\dots, \partial_{z_f}\right\}.
\]

We may now define the Banach space of continuous sections 
$\mathscr{C}_{\textup{ie}}^0(M)$, continuous on $\widetilde{M}$ 
up to the boundary and fibrewise constant at $x=0$. 
This is precisely the space of continuous sections with respect to the topology on $M$ 
induced by the Riemannian metric $g$. The standard 
space of $2k$-times continuously differentiable functions 
in the open interior $M$ is denoted by $C^{2k}(M)$. 
Banach spaces of higher order are now defined as 
follows, compare \cite{BDV:THE}.

\begin{defn}\label{defn:norms}
Let $(M,g)$ be a Riemannian manifold with an incomplete edge metric. 
Let $\mathcal{D}$ denote a subspace generated by a finite collection 
$\widehat{\mathcal{D}}$ of derivatives 
in $\{\Delta, x^{-1}\VV, x^{-1}\V, \V\}$, which will be specified later.
Then for each $k\in \N$ we define
\begin{align*}
&\ck := \{u \in C^{2k}(M) \cap \mathscr{C}_{\textup{ie}}^0(M) \mid  
X \circ \Delta^j u \in \mathscr{C}_{\textup{ie}}^0(M), 
X\in \mathcal{D}, \ j<k\},
\\ &\textup{with the norm } \ \|u\|_{2k} := \|u\|_\infty + \sum_{j=0}^k 
\sum_{X\in \widehat{\mathcal{D}}} \|X \circ \Delta^j u\|_\infty.
\end{align*}
\end{defn}

\subsection{Self-adjoint extension of the bi-Laplacian}\label{friedrichs-subsection}

Let $\Delta$ denote the Laplace Beltrami operator acting on functions on an
incomplete edge space $(M,g)$ with an admissible incomplete edge metric $g$. 
Consider the space of square-integrable forms $L^2(M,g)$, with respect to $g$. The \emph{maximal} and 
\emph{minimal} closed extensions of $\Delta$ are defined by the domains
\begin{equation}\label{expansion-w}
\begin{split}
\mathcal{D}_{\max}(\Delta) &:= \{ u\in L^2(M,g) \mid  \Delta u \in L^2(M,g) \}, \\ 
\mathcal{D}_{\min}(\Delta) &:= \{ u \in \mathcal{D}_{\max}(\Delta)  \mid \exists\,  u_j \in C^\infty_0(M)\ \mbox{such that}  \\ 
&u_j \to u\ \mbox{and}\ \Delta u_j \to \Delta u\ \mbox{both in}\ L^2(M,g) \}.
\end{split}
\end{equation}
where $\Delta u\in L^2$ is a priori understood in the distributional sense. Under the 
unitary rescaling transformation in the singular edge neighborhood
\begin{align}\label{rescaling}
\Phi:L^2(\U, \textup{dvol}(g)) \to L^2(\U, x^{-f}\textup{dvol}(g)), \ u \mapsto x^{f/2} u,
\end{align}
the rescaled Laplacian $\Delta^\Phi = \Phi \circ \Delta \circ \Phi^{-1}$ is a perturbation of 
$$
\Delta^\Phi_0 = -\frac{\partial^2}{\partial x^2} + 
\frac{1}{x^2}\left(\Delta_{F,y} + \left(\frac{f-1}{2}\right)^2 - \frac{1}{4}\right),
$$
with higher order terms coming from the curvature of the Riemannian submersion 
$\phi$ and the second fundamental forms of the fibres $F$. \medskip

The following lemma is a straightforward reformulation of \cite[Lemma 2.2]{MazVer:ATM}
for the simpler case of Laplace Beltrami operators.

\begin{lemma}[\cite{MazVer:ATM}]\label{max}
Let $(M,g)$ be an incomplete edge space with an admissible edge metric. Consider 
the increasing sequence of eigenvalues $(\sigma_j)_{j\in \N}$ of $\Delta_{F,y}$, 
counted with their multiplicities, and put $\nu_j^2:= \sigma_j + (f-1)^2/4$. 
The associated indicial roots are given by $\gamma_j^{\pm} = \pm \nu_j + 1/2$. 
Let $p\in \N$ be the largest index such that $\nu_p \in [0,1)$. 
Then any $u \in \mathcal{D}_{\max}(\Delta)$ admits a weak expansion as $x\to 0$, 
in the sense that for any test function $\chi \in C^\infty(B)$ there is an expansion of the pairing
\begin{equation*}
\int_B \Phi u(x,y,z) \chi(y)\, dy \, \sim \, \sum_{j=1}^p \left( \psi_j^+(x)  c_j^+[u,\chi](z) 
+ \psi_j^-(x)  c^-_j[u,\chi](z) \right) + \tilde{u}, \ x \to 0,
\end{equation*}
where the higher order term $\tilde{u} =O(x^{3/2})$ as $x\to 0$, 
and the coefficients $c_j^\pm[u,\chi]$ 
are constant for $j=1,..., \dim H^0(F)$. Moreover 
\begin{align*}
&\psi^+_j(x,z;y)=x^{\gamma_j^+}, \ \textup{and} \\
&\psi^-_j(x,z;y) = x^{1/2}(\log x), \ \textup{if} \ \nu_j=0, \\
&\psi^-_j(x,z;y) = x^{\gamma_j^-}(1+a_jx)  \ \textup{if} \ \nu_j >0,
\end{align*}
with $a_j\in \R$ uniquely determined by $\Delta$. 
\end{lemma}
The Friedrichs self-adjoint extension of $\Delta$ has been identified in \cite{MazVer:ATM} as
\begin{align}\label{F}
\mathcal{D}(\Delta_{\mathscr{F}}) = \{u \in \mathcal{D}_{\max}(\Delta) \mid \forall_{j=1,..,p}: c^-_j[u]=0\}.
\end{align}
Note that the sequence of eigenvalues $(\sigma_j)_{j=1}^p$ of $\Delta_{F,y}$ starts with $\sigma_j=0$
for $j=1,..,\dim H^0(F)$. The corresponding indicial roots compute to $\gamma_j=1/2\pm (f-1)/2$ 
and the coefficients are constant in $z\in F$, being simply the harmonic functions of fibres $F$. 
Consequently, the Friedrichs domain contains precisely those elements in the 
maximal domain whose leading 
term in the weak expansion as $x\to 0$ is given by $x^0$ with 
fibrewise constant coefficients. In particular 
\begin{align}\label{max-F}
\mathcal{D}_{\max}(\Delta) \cap  \mathscr{C}_{\textup{ie}}^0(M) \subset \mathcal{D}(\Delta_{\mathscr{F}}).
\end{align}

We fix a self-adjoint extension of the bi-Laplacian as the square of $\Delta_{\mathscr{F}}$
\begin{align}
\mathcal{D}(\Delta^2_{\mathscr{F}}) = \{u \in \Delta_{\mathscr{F}} \mid  \Delta u \in \Delta_{\mathscr{F}}\}.
\end{align}

\subsection{The biharmonic heat space blowup}

Consider $\Delta^2_{\mathscr{F}}$ and the corresponding 
heat operator $e^{-t\Delta^2_{\mathscr{F}}}$. Let $H$ be the biharmonic heat kernel, 
the Schwartz kernel of $e^{-t\Delta^2_{\mathscr{F}}}$. 
$H$ is a priori a function on $M^2_h=\R^+\times \widetilde{M}^2$.  
Let $(x,y,z)$ and $(\widetilde{x}, \wy, \widetilde{z})$ be the coordinates on the two copies of $M$ near the edge.
Then the local coordinates near the corner in $M^2_h$ are given by $(t, (x,y,z), (\widetilde{x}, \wy, \widetilde{z}))$.
The kernel $H (t, (x,y,z), (\wx,\wy,\wz))$ has a non-uniform behaviour at the submanifolds
\begin{align*}
A &=\{ (t=0, (0,y,z), (0, \wy, \wz))\in \R^+ \times \Mdel^2 \mid y= \wy\},\\
D &=\{ (t=0, (x,y,z), (\wx, \wy, \wz))\in \R^+ \times \widetilde{M}^2 \mid x=\wx, \, y= \wy, \, z=\wz\}.
\end{align*}
Exactly as in the case of the Hodge Laplacian on edges, see \cite{MazVer:ATM}, we introduce 
an appropriate blowup of the heat space $M^2_h$, such that the corresponding heat kernel lifts to a polyhomogeneous distribution 
in the sense of the definition below. This procedure has been introduced by Melrose in \cite{Mel:TAP}. 
For self-containment of the paper we repeat the definition of polyhomogeneity 
as well as the blowup process here.

\begin{defn}\label{phg}
Let $\mathfrak{W}$ be a manifold with corners, with all boundary faces embedded, and $\{(H_i,\rho_i)\}_{i=1}^N$ an enumeration 
of its boundaries and the corresponding defining functions. For any multi-index $b= (b_1,
\ldots, b_N)\in \C^N$ we write $\rho^b = \rho_1^{b_1} \ldots \rho_N^{b_N}$.  Denote by $\mathcal{V}_b(\mathfrak{W})$ the space
of smooth vector fields on $\mathfrak{W}$ which lie
tangent to all boundary faces. A distribution $\w$ on $\mathfrak{W}$ is said to be conormal,
if $\w\in \rho^b L^\infty(\mathfrak{W})$ for some $b\in \C^N$ and $V_1 \ldots V_\ell \w \in \rho^b L^\infty(\mathfrak{W})$
for all $V_j \in \mathcal{V}_b(\mathfrak{W})$ and for every $\ell \geq 0$. An index set 
$E_i = \{(\gamma,p)\} \subset {\mathbb C} \times {\mathbb N}$ 
satisfies the following hypotheses:

\begin{enumerate}
\item $\textup{Re}(\gamma)$ accumulates only at plus infinity,
\item For each $\gamma$ there is $P_{\gamma}\in \N_0$, such 
that $(\gamma,p)\in E_i$ for every $0\leq p \leq P_\gamma$,
\item If $(\gamma,p) \in E_i$, then $(\gamma+j,p') \in E_i$ for all $j \in {\mathbb N}$ and $0 \leq p' \leq p$. 
\end{enumerate}
An index family $E = (E_1, \ldots, E_N)$ is an $N$-tuple of index sets. 
Finally, we say that a conormal distribution $\w$ is polyhomogeneous on $\mathfrak{W}$ 
with index family $E$, we write $\w\in \mathscr{A}_{\textup{phg}}^E(\mathfrak{W})$, 
if $\w$ is conormal and if in addition, near each $H_i$, 
\[
\w \sim \sum_{(\gamma,p) \in E_i} a_{\gamma,p} \rho_i^{\gamma} (\log \rho_i)^p, \ 
\textup{as} \ \rho_i\to 0,
\]
with coefficients $a_{\gamma,p}$ conormal on $H_i$, polyhomogeneous with index $E_j$
at any $H_i\cap H_j$. 
\end{defn}

Our analysis of the biharmonic heat kernel will start with a discussion of an
explicitly solvable model situation, which leads to a homogeneity property \eqref{homogeneity}.
That property contains the information how precisely the 
submanifolds $A,D\subset M^2_h$ need to be blown up such 
that the heat kernel $H$ becomes polyhomogeneous.
To get the correct blowup of $M^2_h$ we first bi-parabolically 
($t^{1/4}$ is viewed as a coordinate function) blow 
up the submanifold 
\begin{align*}
A = \{ (t, (0,y, z), (0, \wy, \wz))\in \R^+ \times \Mdel^2 : t=0, y=\wy \} \subset M^2_h.
\end{align*}

The resulting heat-space $[M^2_h, A]$ is defined as the union of
$M^2_h\backslash A$ with the interior spherical normal bundle of $A$ in $M^2_h$. 
The blowup $[M^2_h, A]$ is endowed with the unique minimal differential structure 
with respect to which smooth functions in the interior of $M^2_h$ and polar coordinates 
on $M^2_h$ around $A$ are smooth. As in \cite{MazVer:ATM}, this blowup introduces four new boundary 
hypersurfaces;  we denote these by ff (the front face), rf (the right face), 
lf (the left face) and tf (the temporal face).  \medskip

The actual heat-space blowup $\mathscr{M}^2_h$ is obtained by a bi-parabolic blowup 
of $[M^2_h, A]$ along the diagonal $D$, lifted to a submanifold of $[M^2_h, A]$. The resulting blowup $\mathscr{M}^2_h$ is 
defined as before by cutting out the submanifold and replacing it with its spherical 
normal bundle. It is a manifold with boundaries and corners, visualized in Figure below.

\begin{figure}[h]
\begin{center}
\begin{tikzpicture}
\draw (0,0.7) -- (0,2);
\draw(-0.7,-0.5) -- (-2,-1);
\draw (0.7,-0.5) -- (2,-1);

\draw (0,0.7) .. controls (-0.5,0.6) and (-0.7,0) .. (-0.7,-0.5);
\draw (0,0.7) .. controls (0.5,0.6) and (0.7,0) .. (0.7,-0.5);
\draw (-0.7,-0.5) .. controls (-0.5,-0.6) and (-0.4,-0.7) .. (-0.3,-0.7);
\draw (0.7,-0.5) .. controls (0.5,-0.6) and (0.4,-0.7) .. (0.3,-0.7);

\draw (-0.3,-0.7) .. controls (-0.3,-0.3) and (0.3,-0.3) .. (0.3,-0.7);
\draw (-0.3,-1.4) .. controls (-0.3,-1) and (0.3,-1) .. (0.3,-1.4);

\draw (0.3,-0.7) -- (0.3,-1.4);
\draw (-0.3,-0.7) -- (-0.3,-1.4);

\node at (1.2,0.7) {\large{rf}};
\node at (-1.2,0.7) {\large{lf}};
\node at (1.1, -1.2) {\large{tf}};
\node at (-1.1, -1.2) {\large{tf}};
\node at (0, -1.7) {\large{td}};
\node at (0,0.1) {\large{ff}};
\node at (-3,-1) {$\mathscr{M}^2_h$};

\draw[dotted, thick] (5,0) -- (5,2);
\node at (5.6,1.7) {$t^{1/4}$};
\draw[dotted, thick] (5,0) -- (3,-1);
\node at (3.2,-0.5) {$x$};
\draw[dotted, thick] (5,0) -- (7,-1);
\node at (6.8,-0.5) {$\wx$};
\node at (5,-1) {$M^2_h$};

\node at (2.5,0.7) {$\longrightarrow$};
\node at (2.5,1) {$\beta$};

\end{tikzpicture}
\end{center}
\label{figure-incomplete}
\caption{The biharmonic heat-space blowup $\mathscr{M}^2_h$.}
\end{figure}
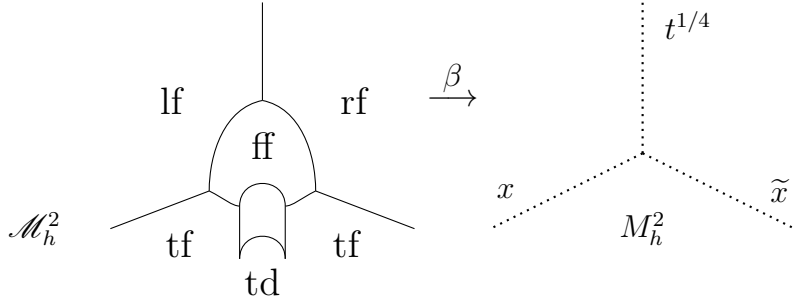

The projective coordinates on $\mathscr{M}^2_h$ are then given as follows. 
Near the top corner of the front face ff, the projective coordinates are given by
\begin{align}\label{top-coord}
\rho=t^{1/4}, \  \xi=\frac{x}{\rho}, \ \widetilde{\xi}=\frac{\wx}{\rho}, \ u=\frac{y-\wy}{\rho}, \ z, \ \wy, \ \wz,
\end{align}
where in these coordinates $\rho, \xi, \widetilde{\xi}$ are the defining 
functions of the boundary faces ff, rf and lf respectively. 
For the bottom corner of the front face near the right hand side projective coordinates are given by
\begin{align}\label{right-coord}
\tau=\frac{t}{\wx^4}, \ s=\frac{x}{\wx}, \ u=\frac{y-\wy}{\wx}, \ z, \ \wx, \ \wy, \ \widetilde{z},
\end{align}
where in these coordinates $\tau, s, \widetilde{x}$ are
 the defining functions of tf, rf and ff respectively. 
For the bottom corner of the front face near the left hand side
projective coordinates are obtained by interchanging 
the roles of $x$ and $\widetilde{x}$. Projective coordinates 
on $\mathscr{M}^2_h$ near temporal diagonal are given by 
\begin{align}\label{d-coord}
\eta=\frac{t^{1/4}}{\wx}, \ S =\frac{(x-\wx)}{t^{1/4}}, \ 
U= \frac{y-\wy}{t^{1/4}}, \ Z =\frac{\wx (z-\wz)}{t^{1/4}}, \  \wx, \ 
\wy, \ \widetilde{z}.
\end{align}
In these coordinates tf is the face in the limit $|(S, U, Z)|\to \infty$, 
ff and td are defined by $\widetilde{x}, \eta$, respectively. 
The blowdown map $\beta: \mathscr{M}^2_h\to M^2_h$ is in 
local coordinates simply the coordinate change back to 
$(t, (x,y, z), (\widetilde{x},\wy, \widetilde{z}))$. \medskip

\subsection{Statement of the main results}

Our first main result is concerned with the asymptotic properties 
of the biharmonic heat kernel as a polyhomogeneous function on the
biharmonic heat space blowup.

\begin{thm}
Let $(M^m,g)$ be an incomplete edge space with an admissible edge metric $g$,
and let $\Delta_{\mathscr{F}}$ denote the Friedrichs extension of the 
corresponding Laplace-Beltrami operator. Let $H$ be the Schwartz kernel
of the heat operator $e^{-t\Delta^2_{\mathscr{F}}}$ associated to the bi-Laplacian
$\Delta^2_{\mathscr{F}}$. Then the lift $\beta^*H$ is polyhomogeneous on $\mathscr{M}^2_h$ of order 
$(-\dim M)$ at $\ff$ and $\td$, vanishing to infinite order at $\tf$, and with the
index set at $\rf$ and $\lf$ given by $E+\N_0$ where
$$
E=\{\gamma \geq 0 \mid  \gamma= -\frac{(f-1)}{2}+ \sqrt{\frac{(f-1)^2}{4}+ \sigma^2},
 \, \sigma^2 \in \textup{Spec}\, \Delta_{F,y}\}.
$$
More precisely, if $s$ denotes the boundary defining function of $\rf$, we obtain 
\begin{align*}
\beta^* H \sim \sum_{\gamma \in E} \left(\sum_{j=0}^\infty s^{\gamma+2j} a_{\gamma,j}(\beta^* H) 
+ \sum_{j=0}^\infty s^{\gamma+2+j} a'_{\gamma,j}(\beta^* H)\right)\ \textup{as} \ s\to 0,
\end{align*}
where the coefficients $a_{\gamma,j}(H)$ are of order $(-m)$ at the front face 
and lie in their corresponding $\Delta_{F,y}$ eigenspaces. The higher coefficients $a'_{\gamma,j}(\beta^* H)$
are of order $(-m+1)$ at ff.
\end{thm}

We employ this microlocal heat kernel description to establish our next 
main theorem on the mapping properties of the corresponding biharmonic
heat operator.

\begin{thm}
Let $(M^m,g)$ be an incomplete edge space with an admissible edge metric $g$
and $\Delta$ the corresponding Laplace Beltrami operator. 
Put $\mathcal{D}_0=\langle \Delta \rangle$ and $\mathcal{D}=
\langle\Delta,  x^{-1}\VV, x^{-1}\mathcal{V}'_e, \V \rangle$, 
where $\mathcal{V}'_e \subset \V$ consists locally of all edge vector fields where $x\partial_y$
is weighted with functions that are fibrewise constant. Then the biharmonic heat operator 
$e^{-t\Delta_{\mathscr{F}}^2}$
is a bounded map between the (weighted) Banach spaces
$$e^{-t\Delta_{\mathscr{F}}^2}: \mathscr{C}^{2k}_{\textup{ie}}(M, \mathcal{D}_0) 
\to t^{-1/4} \mathscr{C}^{2(k+1)}_{\textup{ie}}(M, \mathcal{D}).$$
\end{thm}

We should point out that in the theorem above the target Banach space
$\mathscr{C}^{2(k+1)}_{\textup{ie}}(M, \mathcal{D})$ is defined with respect to 
the set $\mathcal{D}$ that includes a large variety of higher order derivatives.
In fact, from the perspective of the presented proof, 
this is the largest possible variety of derivatives with respect to which 
boundedness of the biharmonic heat operator $e^{-t\Delta_{\mathscr{F}}^2}$ persists.
On the other hand, the initial space $\mathscr{C}^{2k}_{\textup{ie}}(M, \mathcal{D}_0)$
poses significantly less regularity assumptions, since it is defined with respect to 
a very restricted set of derivatives $\mathcal{D}_0=\langle\Delta\rangle$. 
In that respect, the biharmonic heat operator indeed improves regularity.\medskip

An important aspect of the statement is that regularity is not defined with 
respect to $x^{-1}\V$ but rather $x^{-1}\mathcal{V}'_e$, i.e. we require the 
generators to be weighted with functions that are constant on fibres $F$ when 
restricted to $x=0$. This is due to the fact that we consider spaces of continuously differentiable
functions with the continuity defined with respect to the Riemannian metric $g$.
Such continuous functions are constant on fibres at $\partial M$. For this aspect 
also note the Remark \ref{ie}.\medskip

Our final result is concerned with local existence of solutions to certain
semi-linear parabolic equations of fourth order.

\begin{thm}
Let $(M,g)$ be an incomplete edge space with an admissible edge metric $g$. 
Put $\mathcal{D}'=\langle\Delta,  \VV, \V \rangle$ and $\mathcal{D}=\langle\Delta,  x^{-1}\VV, x^{-1}\mathcal{V}'_e, \V\rangle$, 
where $\mathcal{V}'_e \subset \V$ consists locally of linear combinations of $\{x\partial_x, x\partial_y, \partial_z\}$, 
where $x\partial_y$ is weighted with functions that are fibrewise constant. Assume $Q: \mathscr{C}^{2(k+1)}_{\textup{ie}}(M, \mathcal{D}')
\to \mathscr{C}^{2k}_{\textup{ie}}(M, \mathcal{D}')$ is locally Lipschitz. Then the semilinear equation
\begin{equation*}
\partial_t u + \Delta^2u = Q(u), \ u(0) = u_0 \in \mathscr{C}^{2(k+1)}_{\textup{ie}}(M, \mathcal{D}')
\end{equation*}
has a unique solution $u \in C([0,T], \mathscr{C}^{2(k+1)}_{\textup{ie}}(M, \mathcal{D}))\cap C^\infty((0,T]\times M)$, for some $T > 0$,
where $T$ may be estimated from below in terms of $||u_0||_{2(k+1)}$.
\end{thm}

As an application we arrive at a statement on existence and regularity of 
solutions to the Cahn-Hilliard equation.

\begin{cor}
Let $(M,g)$ be an incomplete edge space with an admissible edge metric $g$. 
Put $\mathcal{D}'=\langle \Delta,  \VV, \V \rangle$ and 
$\mathcal{D}=\langle\Delta,  x^{-1}\VV, x^{-1}\mathcal{V}'_e, \V\rangle$, 
where $\mathcal{V}'_e \subset \V$ consists locally of linear combinations of $\{x\partial_x, x\partial_y, \partial_z\}$, 
where $x\partial_y$ is weighted with functions that are fibrewise constant. Then the Cahn-Hilliard equation
\begin{equation*}
\partial_t u + \Delta^2u + \Delta (u-u^3)= 0, \ u(0) = u_0 \in \mathscr{C}^{2(k+1)}_{\textup{ie}}(M, \mathcal{D}')
\end{equation*}
has a unique solution $u \in C([0,T], \ckk)\cap C^\infty((0,T]\times M)$, for some $T > 0$.
\end{cor} 

It should be noted that in correspondence with \cite{RoSc12} our approach 
leads to an explicit identification of the asymptotics of the Cahn-Hilliard solution 
at $x=0$. Indeed, $u\in \ckk \subset\dom (\Delta_{\mathscr{F}}^{k+1})$, which 
yields a partial asymptotics of $u$ to higher and higher order, depending on $k\in \N$, by an iterative application of Lemma \ref{max} for $k$ steps.

\section{Microlocal heat kernel construction}\label{heat-section}

\subsection{Biharmonic heat kernel on a model edge}

In this section we construct the heat kernel for $\Delta^2_{\mathscr{F}}$ explicitly.
We begin with studying the homogeneity properties of the heat kernel for the bi-Laplacian 
in the model case of an exact edge $(\calE=\R^b \times \mathscr{C}(F), dy^2+ g)$
where $(\mathscr{C}(F) = (0,\infty) \times F, g= ds^2+s^2g^F)$ is an exact unbounded 
cone over a closed Riemannian manifold $(F,g^F)$. The Laplacian $\Delta_{\calE}$ 
on the exact edge is then a sum of the Laplacian on $(\mathscr{C}(F), g)$ and the Euclidean 
Laplacian on $\R^b$. Consider the scaling operation ($\lambda > 0$)

\begin{align*}
&\Psi_\lambda: C^\infty (\R^+ \times \calE^2) \to C^\infty (\R^+ \times \calE^2), \\
&(\Psi_\lambda u)(t,(s,y,z),(\widetilde{s},\wy,\widetilde{z})) = u(\lambda^4 t,(\lambda s,\lambda(y-\wy), z),
(\lambda\widetilde{s},\lambda \wy,\widetilde{z})).
\end{align*}
Under the scaling operation we find 
\begin{align}\label{scaling-heat-eqn}
(\partial_t + \Delta_\calE^2)\Psi_\lambda u = 
\lambda^4 \Psi_\lambda (\partial_t + \Delta_\calE^2)u.
\end{align}
Consequently, given the heat kernel $H_\calE$ for the Friedrichs extension 
of $\Delta_\calE^2$ (or at that stage any other self-adjoint extension), any  
multiple of $\Psi_\lambda H_\calE$ still solves the heat equation and also maps into the domain 
of $\Delta_\calE^2$. For the initial condition we obtain substituting 
$\widetilde{Y}=\lambda \wy, \widetilde{S}=\lambda \widetilde{s}$
\begin{align*}
&\lim_{t\to 0} \int_\calE (\Psi_\lambda H_\calE)
(t,\, s,\, y,\, z,\, \widetilde{s},\, \wy, \, \widetilde{z}) u(\widetilde{s}, \, \wy,\,  \widetilde{z})\,
\widetilde{s}^f \, d\widetilde{s}\,  d\wy \, d\widetilde{z} \\
=& \lim_{t\to 0} \lambda^{-1-b-f}\int_\calE H_\calE
(\lambda ^4 t,\, \lambda s, \, \lambda y - \widetilde{Y}, \, z,\, \widetilde{S}, \, \widetilde{Y},\, \widetilde{z}) \, 
u(\widetilde{S}/\lambda, \, \widetilde{Y}/\lambda, \, \widetilde{z}) \widetilde{S}^f \, d\widetilde{S}\, d\widetilde{Y} d\widetilde{z} \\ 
=& \lambda^{-1-b-f} u(\lambda s/\lambda, \,\lambda y/\lambda, \, z) = \lambda^{-1-b-f} u(s,\, y,\, z).
\end{align*} 
By uniqueness of the heat kernel we obtain
\begin{align}\label{homogeneity}
\Psi_\lambda H_\calE= \lambda^{1-b-f}H_\calE.
\end{align}

In addition to the homogeneity properties of $H_\calE$, we also require a full asymptotic 
expansion of the biharmonic heat kernel as $(s,\widetilde{s})\to 0$. 
We accomplish this by establishing an explicit integral representation of $H_\calE$.
Under the unitary rescaling \eqref{rescaling} and a spectral decomposition of 
$L^2(F, g^F)$ into $\sigma^2$-eigenspaces of $\Delta_F$, we may write for the rescaled model edge Laplacian
\begin{align*}
\Delta^\Phi_\calE &= -\partial_s^2 + s^{-2} \left(\Delta_F + \left(\frac{f-1}{2}\right)^2-\frac{1}{4}\right) + \Delta_{\R^b}\\
&=\bigoplus_{\sigma} -\partial_s^2 + s^{-2} \left(\sigma^2 + \left(\frac{f-1}{2}\right)^2-\frac{1}{4}\right) + \Delta_{\R^b}
=:\bigoplus_{\sigma} l_{\nu(\sigma)} + \Delta_{\R^b},
\end{align*}
where $\nu(\sigma):= \sqrt{\sigma^2+ (f-1)^2/4}$ and $l_{\nu(\sigma)}$ is defined on $C^\infty_0(0,\infty)$. 
The Friedrichs extension of $\Delta_\calE$ 
is compatible with the decomposition, compare a similar discussion in (\cite{Ver:FDF}, Proposition 4.9).
As a special case of Lemma \ref{max}, $l_\nu$ has unique self-adjoint extension $L_\nu$ in $L^2(\R^+)$ for $\nu \geq 1$, and 
in case of $\nu \in [0,1)$, solutions $u \in \dom(l_{\nu, \max})$ admit a partial asymptotic expansion as $s\to 0$
\begin{equation*}
u(s) = \widetilde{u} + c^+[u] \, s^{\nu+1/2} + c^-[u] \left\{ \begin{split} &s^{-\nu+1/2}, \ \nu \in (0,1), \\
&\sqrt{s}\log(s), \ \nu =0, \end{split}\right. \qquad  \widetilde{u} \in \dom (l_{\nu, \min}).
\end{equation*}
Then the Friedrichs extension $L_\nu$ of $l_\nu$ is defined, similar to \eqref{F},
by requiring $c^-[u]=0$, and moreover, identifying $\Delta_{\R^b}$ with its unique self-adjoint extension in $L^2(\R^b)$,
we may write
\begin{align}\label{decomp}
\Delta^\Phi_{\calE,\mathscr{F}} = \bigoplus_{\sigma} L_{\nu(\sigma)} + \Delta_{\R^b}.
\end{align}
Consequently, it suffices to construct the biharmonic heat kernel for $L_{\nu} + \Delta_{\R^b}$ in $L^2(\R^+\times \R^b)$.
Denote by $J_\nu$ the $\nu$-th Bessel function of first kind and consider the Hankel transform of order $\nu\geq 0$
\begin{align}
(\mathscr{H}_\nu u)(s) := \int_0^\infty \sqrt{s s'} J_\nu(s s') u(s') ds', \ u \in C^\infty_0(0,\infty).
\end{align}
By (\cite{Co:LTD}, Chapter III) and also by (\cite{Les:OFT}, Proposition 2.3.4), the Hankel transform 
extends to a self-adjoint isometry on $L^2(\R^+)$. We denote by 
$$(\mathscr{F} u)(\xi) := (2\pi)^{-b/2} \int_{\R^b} u(y) e^{-i y\cdot \xi} dy, \quad u \in C^\infty_0(\R^b),$$ the Fourier transform
on $\R^b$, which extends to an isometric automorphism of $L^2(\R^b)$. Consequently, 
$\mathscr{G}_\nu:=\mathscr{H}_\nu \circ \mathscr{F}$ defines an isometric automorphism of 
$L^2(\R^+\times \R^b)$ such that $\mathscr{G}_\nu^{-1} = \mathscr{H}_\nu \circ \mathscr{F}^{-1}$.
Applying (\cite{Les:OFT}, Proposition 2.3.5), we arrive at the following
\begin{prop}\label{diag}
The isometric automorphism $\mathscr{G}_\nu$ diagonalizes $L_\nu + \Delta_{\R^b}$.
More precisely, 
\begin{align*}
&\dom (L_\nu + \Delta_{\R^b}) = \{u \in L^2(\R^+\times \R^b) 
\mid (S^2+ |\Xi|^2) \, \mathscr{G}_\nu u \in L^2(\R^+\times \R^b)\}, \\
&\mathscr{G}_\nu \, (L_\nu + \Delta_{\R^b}) \, \mathscr{G}^{-1}_\nu = S^2+ |\Xi|^2,
\end{align*}
where $X, \Xi$ denote multiplication operators by $x\in \R^+$ and $\xi\in \R^b$, respectively.
\end{prop}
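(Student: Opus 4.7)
The plan is to reduce the proposition to the two one-variable diagonalization statements already in the literature and then to combine them by a tensor product argument, using that the two factors act on disjoint variables and hence strongly commute.

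First I would record the two ingredients. By (\cite{Les:OFT}, Proposition 2.3.5), just cited before the statement, the Hankel transform $\mathscr{H}_\nu$ is a self-adjoint isometric involution on $L^2(\R^+)$ that diagonalizes the Friedrichs extension $L_\nu$ into multiplication by $S^2$, i.e.\ $\dom(L_\nu) = \{u\in L^2(\R^+) \mid S^2 \mathscr{H}_\nu u \in L^2(\R^+)\}$ and $\mathscr{H}_\nu L_\nu \mathscr{H}_\nu = S^2$. Classically, the Fourier transform $\mathscr{F}$ is an isometric automorphism of $L^2(\R^b)$ satisfying $\mathscr{F} \Delta_{\R^b} \mathscr{F}^{-1} = |\Xi|^2$ on the natural domain $\{u\in L^2(\R^b)\mid |\Xi|^2 \mathscr{F} u \in L^2(\R^b)\}$.

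Second, I would identify $L^2(\R^+\times \R^b)\cong L^2(\R^+)\otimes L^2(\R^b)$ and observe that $\mathscr{H}_\nu$ acts only on the $s$-variable while $\mathscr{F}$ acts only on the $y$-variable, so that $\mathscr{G}_\nu$ agrees with the tensor product $\mathscr{H}_\nu \otimes \mathscr{F}$; this is an isometric automorphism whose inverse is $\mathscr{H}_\nu\otimes \mathscr{F}^{-1}$ by $\mathscr{H}_\nu^2 = I$. Since $L_\nu \otimes I$ and $I \otimes \Delta_{\R^b}$ are strongly commuting self-adjoint operators (they are simultaneously diagonalized by $\mathscr{G}_\nu$ into multiplication by $S^2$ and $|\Xi|^2$ respectively), their sum is well defined by the spectral theorem on the intersection of their domains, and under $\mathscr{G}_\nu$ it is transformed into multiplication by $S^2 + |\Xi|^2$, whose maximal multiplication domain on $L^2(\R^+\times \R^b)$ is exactly the one stated in the proposition.

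The one point deserving care is the match with the operator $L_\nu + \Delta_{\R^b}$ introduced via \eqref{decomp}: one must verify that this \emph{a priori} formal sum of two self-adjoint extensions coincides with the self-adjoint operator obtained by pulling back $S^2+|\Xi|^2$ through $\mathscr{G}_\nu$. I would do this by exhibiting a common core, for instance $C^\infty_c(\R^+)\otimes C^\infty_c(\R^b)$: on such product test functions Fubini and the Plancherel identities for $\mathscr{H}_\nu$ and $\mathscr{F}$ give the identity $\mathscr{G}_\nu(L_\nu + \Delta_{\R^b})u = (S^2+|\Xi|^2)\mathscr{G}_\nu u$ by direct computation, and this subspace is mapped by $\mathscr{G}_\nu$ into a core for multiplication by $S^2 + |\Xi|^2$. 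Closing both sides then yields the claimed equality of self-adjoint operators and the claimed domain description, which is the main obstacle; everything else is bookkeeping.
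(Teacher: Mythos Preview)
Your proposal is correct and follows essentially the same route as the paper: the paper simply states that the proposition is obtained by ``applying (\cite{Les:OFT}, Proposition 2.3.5)'' together with the classical Fourier diagonalization of $\Delta_{\R^b}$, without spelling out any further details. Your tensor-product and common-core arguments are a careful unpacking of what the paper leaves implicit, so there is nothing to compare beyond the level of detail.
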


Similary, the isometry $\mathscr{G}_\nu$ diagonalizes the squared 
operator $(L_\nu + \Delta_{\R^b})^2$, identifying its action with $(S^2+ |\Xi|^2)^2$.
Consequently we may express the biharmonic heat kernel of $(L_\nu + \Delta_{\R^b})^2$
as an integral in terms of Bessel functions. For $u \in C^\infty_0(\R^+\times \R^b)$ we find
\begin{align*}
&\left(e^{-t (L_\nu + \Delta_{\R^b})^2} u \right)(s,y) = 
\left(\mathscr{G}_\nu e^{-t (S^2+ |\Xi|^2)^2} \mathscr{G}_\nu^{-1} u\right)(s,y) \\
&=(2\pi)^{-b/2} \int_{\R^b} \int_0^\infty \left(\int_{\R^b} \int_0^\infty e^{i(y-\wy)\xi} 
\, \sqrt{s\widetilde{s}} \, J_\nu(s\rho) J_\nu(\widetilde{s}\rho) \, \rho \, 
e^{-t(\rho^2+ |\xi|^2)^2} d\rho \, d\xi \right) \\ & \quad u(\widetilde{s}, \wy)\, d\widetilde{s} \, d\wy.
\end{align*}
Denote by $\phi_\sigma$ the normalized $\sigma^2$-eigenfunction of $\Delta_F$, where 
we count the eigenvalues $\sigma^2\in \textup{Spec}(\Delta_F)$ with their multiplicities.
Then, as a consequence of \eqref{decomp}, we finally obtain for the $\Phi$-rescaled
biharmonic heat kernel on a model edge 
\begin{align*}
H^\Phi_\calE = 
(2\pi)^{-b/2} \bigoplus_{\sigma} \int_{\R^b} \int_0^\infty e^{i(y-\wy)\xi} \, \sqrt{s\widetilde{s}} \, 
J_{\nu(\sigma)}(s\rho) J_{\nu(\sigma)}(\widetilde{s}\rho) \, &\rho \, e^{-t(\rho^2+ |\xi|^2)^2} d\rho \, d\xi 
\\ &\cdot \phi_\sigma (z) \otimes \phi_\sigma (\wz).
\end{align*}
The $\nu$-th Bessel function of first kind admits an asymptotic expansion for small arguments
$J_\nu(\zeta) \sim \sum_{j=0}^\infty a_j \zeta^{\nu+2j}$, as $\zeta \to 0$.
This yields an asymptotic expansion of $H^\Phi_\calE$ as $(s,\widetilde{s})\to 0$ and consequently, 
rescaling back, we obtain as $s\to 0$
\begin{align}\label{expansion-cone}
\quad H_\calE (t,\, s,\, y,\, z,\, \widetilde{s},\, \wy, \, \widetilde{z})
\sim \sum_{\gamma} a_{\nu,j}(t,\widetilde{s}, y, \wy, z,\widetilde{z}) s^{\gamma+2j},
\end{align}
where the summation is over all $\gamma = -(f-1)/2 +\sqrt{\sigma^2 + (f-1)^2/4}$
with $\sigma^2 \in \textup{Spec}\Delta_F$, counted with multiplicity, and each coefficient $a_{\nu,j}$ lies in the
corresponding $\sigma^2$-eigenspace. We summarize the properties of $H_\calE$, 
established above, in a single proposition for later reference.

\begin{prop}\label{model-heat}
Consider the model edge $(\calE=\R^b \times \mathscr{C}(F), dy^2+ g)$,
where $(\mathscr{C}(F) = (0,\infty) \times F, g= ds^2+s^2g^F)$ is an exact unbounded 
cone over a closed Riemannian manifold $(F^f,g^F)$. Fix the Friedrichs self-adjoint extension 
of the associated Laplace Beltrami operator $\Delta_\calE$. Then the biharmonic heat kernel $H_\calE$ 
of $\Delta_\calE^2$ is homogeneous of order $(-1-b-f)$ under the scaling operation ($\lambda > 0$)
\begin{align*}
&\Psi_\lambda: C^\infty (\R^+ \times \calE^2) \to C^\infty (\R^+ \times \calE^2), \\
&(\Psi_\lambda u)(t,(s,y,z),(\widetilde{s},\wy,\widetilde{z})) = u(\lambda^4 t,(\lambda s,\lambda(y-\wy), z),
(\lambda\widetilde{s},\lambda \wy,\widetilde{z})).
\end{align*}
Moreover, $H_\calE$ admits an asymptotic expansion as $(s, \widetilde{s}) \to 0$ with the
index set given by $E+2\N_0$, where
$$
E=\{\gamma \geq 0 \mid  \gamma= -\frac{(f-1)}{2}+ \sqrt{\frac{(f-1)^2}{4}+ \sigma^2},
\, \sigma^2 \in \textup{Spec}\, \Delta_{F}\},
$$
uniformly in other variables and with coefficients taking value in the corresponding 
$\sigma^2$-eigenspace.
\end{prop}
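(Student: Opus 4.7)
The plan is to prove the two claims — homogeneity and asymptotic expansion — separately, exploiting the product structure of the model edge and the explicit Bessel-Hankel representation of the heat kernel suggested by the discussion preceding the statement.

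For homogeneity, I would first verify the intertwining relation $(\partial_t+\Delta_{\calE}^2)\Psi_\lambda = \lambda^4\Psi_\lambda(\partial_t+\Delta_{\calE}^2)$, which follows directly from the scale invariance of $\Delta_{\calE}=-\partial_s^2-\frac{f}{s}\partial_s+\frac{1}{s^2}\Delta_F+\Delta_{\R^b}$ under $(s,y)\mapsto(\lambda s,\lambda y)$. I would then observe that $\Psi_\lambda H_\calE$ remains a kernel for a solution of the biharmonic heat equation, check that it maps into the Friedrichs domain (using that the rescaling preserves the condition $c_j^-[u]=0$, since both indicial powers $s^{\gamma_j^{\pm}}$ scale as $\lambda^{\gamma_j^{\pm}}$), and then pin down the homogeneity factor by testing on an initial datum and performing the change of variables $\widetilde{S}=\lambda\widetilde{s}$, $\widetilde{Y}=\lambda\wy$, which produces the Jacobian $\lambda^{-1-b-f}$ in the integral representation of $(e^{-t\Delta_\calE^2}u)(s,y,z)$. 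Uniqueness of the biharmonic heat kernel for the Friedrichs extension then yields $\Psi_\lambda H_\calE=\lambda^{1-b-f}H_\calE$, which expressed as a homogeneity of the kernel density gives the claimed order $-1-b-f$.

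For the asymptotic expansion, I would use the diagonalization in Proposition \ref{diag}: under $\mathscr{G}_\nu = \mathscr{H}_\nu\circ\mathscr{F}$, the operator $(L_\nu+\Delta_{\R^b})^2$ becomes multiplication by $(\rho^2+|\xi|^2)^2$, so summing over the $\sigma^2$-eigenspaces of $\Delta_F$ gives the explicit formula for $H_\calE^\Phi$ displayed before \eqref{expansion-cone}. Inserting the small-argument expansion $J_{\nu(\sigma)}(s\rho)\sim\sum_{j\geq 0}a_j(s\rho)^{\nu(\sigma)+2j}$ in the $s$-variable, and noting that the remaining $\rho$-integral $\int_0^\infty \rho^{\nu(\sigma)+2j+1}J_{\nu(\sigma)}(\widetilde{s}\rho)e^{-t(\rho^2+|\xi|^2)^2}d\rho$ converges absolutely and smoothly in $(t,\widetilde{s},\xi)$ (the Gaussian-type decay of $e^{-t(\rho^2+|\xi|^2)^2}$ dominates any polynomial), one obtains a term-by-term expansion in powers $s^{\nu(\sigma)+2j+1/2}$. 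Undoing the $\Phi$-rescaling converts these powers into $s^{\gamma+2j}$ with $\gamma = -(f-1)/2+\sqrt{\sigma^2+(f-1)^2/4}$, and the coefficient of each term sits in the $\sigma^2$-eigenspace because it comes from the $\phi_\sigma(z)\otimes\phi_\sigma(\widetilde{z})$ summand.

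The main obstacle, which I would address carefully, is justifying that the formal term-by-term expansion is a genuine asymptotic expansion uniform in $(t,\widetilde{s},y,\wy,z,\widetilde{z})$ on compact sets (and with uniform remainder estimates as $s\to 0$), and that summing over the spectrum $\{\sigma^2\}\subset\textup{Spec}\,\Delta_F$ yields a polyhomogeneous description rather than a merely formal one. Here I would use Weyl's asymptotic $\sigma_k\sim c\, k^{1/f}$ to control the tail of the spectral sum together with the super-exponential decay coming from the heat factor, and apply a standard stationary-phase / integration-by-parts argument to show that truncating the Bessel series at order $N$ yields a remainder of order $s^{\gamma_{\min}+2(N+1)}$ uniformly. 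By symmetry of $H_\calE$ under $(s,y,z)\leftrightarrow(\widetilde{s},\wy,\widetilde{z})$ an identical expansion holds as $\widetilde{s}\to 0$, producing the joint expansion with index set $E+2\N_0$ in both variables.
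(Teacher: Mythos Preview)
Your proposal is correct and follows essentially the same route as the paper: the homogeneity is obtained exactly as you outline (scaling intertwining plus the Jacobian computation on the initial condition, then uniqueness), and the asymptotic expansion is derived from the explicit Hankel--Fourier integral representation via the small-argument Bessel series, yielding \eqref{expansion-cone}. If anything, you are more careful than the paper, which treats the term-by-term expansion and the spectral summation formally without spelling out the remainder and Weyl-type tail estimates you propose; these are welcome additions but not a different strategy.
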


\subsection{Construction of the biharmonic heat kernel}

We can now proceed from the analysis of the heat kernel on the model edge
to the construction of the heat kernel $H$ for the bi-Laplacian on a space $(M,g)$
with an incomplete admissible edge metric. The heat kernel construction here follows 
ad verbatim the discussion in \cite{MazVer:ATM} for the edge Laplacian, with the only difference that 
for the bi-Laplacian now rather $t^{1/4}$ instead of $\sqrt{t}$ is treated as a smooth variable. \medskip

In case the edge manifold is an exact edge $(\calE=\R^b \times \mathscr{C}(F), dy^2+ g)$
where $(\mathscr{C}(F) = (0,\infty) \times F^f, g= ds^2+s^2g^F)$, 
Proposition \ref{model-heat} implies that $H_\calE$ 
lifts to a polyhomogeneous conormal distribution on the biharmonic heat space blowup, 
of order $(-m)$ at the front and the temporal diagonal faces, vanishing to infinite order at $\tf$,
and with the index set at $\rf$ and $\lf$ given by $E+2\N_0$, where
$$
E=\{\gamma \geq 0 \mid  \gamma= -\frac{(f-1)}{2}+ \sqrt{\frac{(f-1)^2}{4}+ \sigma^2},
 \, \sigma^2 \in \textup{Spec}\, \Delta_{F}\}.
$$
In the general case of an admissible edge space $(M,g)$, 
$H_\calE$ is only an initial parametrix, defines a polyhomogeneous function on 
the front face of $\mathscr{M}^2_h$ and solves the heat equation only to first order. 
Repeating almost ad verbatim the heat kernel construction in case of the edge Laplacian in \cite{MazVer:ATM}, we arrive at the following

\begin{prop}\label{heat}
Let $(M^m,g)$ be an incomplete edge space with an admissible edge metric $g$. 
Then the lift $\beta^*H$ is polyhomogeneous on $\mathscr{M}^2_h$ of order 
$(-\dim M)$ at $\ff$ and $\td$, vanishing ot infinite order at $\tf$, and with the
index set at $\rf$ and $\lf$ given by $E+\N_0$ where
$$
E=\{\gamma \geq 0 \mid  \gamma= -\frac{(f-1)}{2}+ \sqrt{\frac{(f-1)^2}{4}+ \sigma^2},
 \, \sigma^2 \in \textup{Spec}\, \Delta_{F,y}\}.
$$
More precisely, if $s$ denotes the boundary defining function of $\rf$, we obtain 
\begin{align*}
\beta^* H \sim \sum_{\gamma \in E} \left(\sum_{j=0}^\infty s^{\gamma+2j} a_{\gamma,j}(\beta^* H) 
+ \sum_{j=0}^\infty s^{\gamma+2+j} a'_{\gamma,j}(\beta^* H)\right)\ \textup{as} \ s\to 0,
\end{align*}
where the coefficients $a_{\gamma,j}(H)$ are of order $(-m)$ at the front face 
and lie in their corresponding $\Delta_{F,y}$ eigenspaces. The higher coefficients $a'_{\gamma,j}(\beta^* H)$
are of order $(-m+1)$ at ff.
\end{prop}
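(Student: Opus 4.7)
The plan is to adapt the microlocal parametrix construction of Mazzeo--Vertman \cite{MazVer:ATM} for the Hodge Laplacian essentially verbatim, modified by treating $t^{1/4}$ in place of $\sqrt{t}$ as the smooth parabolic variable. This fourth-order rescaling is already encoded in the bi-parabolic nature of the heat-space blowup $\mathscr{M}^2_h$ and is dictated by the homogeneity \eqref{homogeneity}.

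First I would assemble an initial parametrix $H_0$ on $\mathscr{M}^2_h$ by lifting the model kernel $H_\calE$ from Proposition \ref{model-heat} to a collar neighborhood of the front face $\ff$, using the projective coordinates \eqref{top-coord} and \eqref{right-coord}, and patching in a standard Euclidean biharmonic heat kernel near the temporal diagonal $\td$ using \eqref{d-coord}. Thanks to the bi-parabolic blowup construction together with Proposition \ref{model-heat}, this $H_0$ lifts to a polyhomogeneous conormal distribution of order $(-m)$ at $\ff$ and $\td$, vanishing to infinite order at $\tf$, and with the pure model index set $E+2\N_0$ at $\rf$ and $\lf$.

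Next I would analyze the error $E_0 := (\partial_t + \Delta^2) H_0$. By feasibility the normal operator of $x^4(\partial_t + \Delta^2)$ at $\ff$ coincides with $\partial_t + \Delta_\calE^2$, so the leading front-face contribution cancels and $E_0$ gains one order of vanishing at $\ff$. Admissibility condition (i), $\lambda_0 > \dim F$, guarantees that the indicial equation for $\Delta^2$ is invertible on every non-trivial eigenspace without introducing logarithmic terms at leading order, so the indicial family at $\rf, \lf$ stays purely of the form $s^{\gamma+\text{integer}}$. Admissibility condition (ii), $|h|_{g_0} = O(x^2)$, controls the perturbation $\Delta^2 - \Delta_\calE^2$ and is precisely what pushes the error contributions to $\rf$ into order $s^{\gamma+2}$ and higher, thereby generating the secondary coefficient family $a'_{\gamma,j}$. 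Vanishing to infinite order at $\tf$ is provided by the Euclidean analysis at the diagonal.

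The main technical step, and the one I expect to be the principal obstacle, is the Volterra iteration $H_{k+1} := H_k - H_k \ast E_k$ and the summation of the resulting Neumann series. One needs to show that the triple-space composition of two polyhomogeneous kernels on $\mathscr{M}^2_h$ is again polyhomogeneous on $\mathscr{M}^2_h$, with index sets computed by Melrose's push-pull formalism on a suitable triple blowup, and that each iteration strictly improves the order of vanishing at $\tf$ so that the Neumann series may be Borel-summed to a polyhomogeneous kernel $H$. Careful bookkeeping of the indices then produces the split expansion stated: the even tower $s^{\gamma+2j}$ with coefficients $a_{\gamma,j}$ of order $(-m)$ at $\ff$ is inherited directly from the unperturbed model kernel and survives each step of the iteration, while the correction tower $s^{\gamma+2+j}$ with coefficients $a'_{\gamma,j}$ of order $(-m+1)$ at $\ff$ is produced by the $O(x^2)$ perturbation and populates the full integer ladder above order two. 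Once convergence is in hand, uniqueness of the heat semigroup for the Friedrichs extension $\Delta^2_{\mathscr{F}}$ identifies the resulting distribution with $\beta^* H$.
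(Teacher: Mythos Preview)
Your overall template---adapt the Mazzeo--Vertman edge parametrix to fourth order by replacing $\sqrt{t}$ with $t^{1/4}$---is exactly what the paper does, and your description of the initial parametrix $H_0$ built from the model kernel $H_\calE$ is correct. But two points in your middle paragraphs misidentify where the work actually lies.

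First, admissibility condition (i), $\lambda_0>\dim F$, plays no role in the parametrix construction or in the solvability of the indicial equation. The indicial equation \((-\partial_s^2 - fs^{-1}\partial_s + s^{-2}\Delta_{F,y_0})^2 u = s^k a\) is solved by Mellin transform plus fibrewise spectral decomposition, and the solution is polyhomogeneous regardless of the size of $\lambda_0$; no logarithms appear because the construction never hits indicial roots. In the paper, condition (i) is invoked only \emph{after} the kernel is built, to check that $\Delta H$ still lands in $\dom(\Delta_{\mathscr{F}})$: under (i) every nonzero $\gamma\in E$ satisfies $\gamma>1$, so one only has to verify that the $s^0$ coefficient of $\Delta H$ is fibrewise constant, which follows by inspecting how the perturbation terms act on $s^0 a_{0,0}$. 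The polyhomogeneity and index-set statement in the proposition use only feasibility together with condition (ii).

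Second, your iteration scheme as written has a gap. After building $H_0$, the error $P_0=\beta^*(t\mathcal{L})H_0$ does gain one order at $\ff$ (order $-m+1$ thanks to condition (ii)), but at $\rf$ it is \emph{worse}, of leading order $s^{\gamma-2}$, because the bi-Laplacian drops the $s$-exponent by four while the normal operator cancellation restores only two. You cannot pass directly to a Volterra/Neumann composition: composing with a kernel that is singular at $\rf$ will not improve anything, and your claim that each iterate ``strictly improves the order of vanishing at $\tf$'' is beside the point, since $\tf$ already carries infinite-order vanishing from the Euclidean diagonal analysis. The paper inserts an intermediate step: solve the indicial ODE term by term to build a correction $H_0'$ of leading order $s^{\gamma+2}$ at $\rf$ and order $-m+1$ at $\ff$, so that $H_1=H_0+H_0'$ has error $P_1$ vanishing to infinite order at $\rf$ and $\td$. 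It is this indicial correction that produces the secondary tower $\sum s^{\gamma+2+j}a'_{\gamma,j}$ with the improved $\ff$-order $(-m+1)$. Only then does the composition $H_1\circ P_1^k$ converge, gaining at $\ff$ at each step. Without that correction your Neumann series does not sum to a polyhomogeneous kernel with the stated $\rf$ index set.
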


\begin{proof}
Recall the heat kernel construction in \cite{MazVer:ATM}, which we basically follow here.
Denote by $\Delta$ the Laplace Beltrami operator on $(M,g)$.
We write $\mathcal{L}:=\partial_t + \Delta^2$ for the heat operator. 
The restriction of the lift $\beta^*(t\mathcal{L})$ to $\ff$ is called the normal operator 
$N_{\ff}(t\mathcal{L})_{y_0}$ at the front face (at the fibre over $y_0\in B$) 
and is given in projective coordinates \eqref{right-coord} explicitly as follows
\begin{align*}
N_{\ff}(t\mathcal{L})_{y_0}&=\tau \left(\partial_{\tau} + \left(- \partial_s^2 - fs^{-1}\partial_s +
s^{-2}\Delta_{F,y_0} + \Delta^{\R^b}_u\right)^2\right) \\
&=:\tau \left(\partial_{\tau} + \left(\Delta^{\mathscr{C}(F)}_{s,y_0} + \Delta^{\R^b}_u\right)^2\right).
\end{align*}
$N_{\ff}(t\mathcal{L})$ does not involve derivatives with respect to $(y_0,\wx, \wy,\wz)$ and hence acts 
tangentially to the fibres of the front face. Consequently in our choice of an initial parametrix $H_0$ we note that 
the equation
\[
N_{\ff}(t\mathcal{L}) \circ N_{\ff}(H_0) = 0
\]
is the heat equation for the bi-Laplace operator on the model edge $\mathscr{C}(F)\times \R^{b}$. 
Consequently, the initial parametrix $H_0$ is defined by choosing $N_{\ff}(H_0)$ to 
equal the fundamental solution for the heat operator $N_{\ff}(t\calL)$, and extending 
$N_{\ff}(H_0)$ trivially to a neighborhood of the front face. More precisely, 
consider the biharmonic heat kernel $H_{\calE,y}$ on the model edge 
$(\mathscr{C}(F)\times \R^{b}, ds^2 + s^2g^F_{y_0} + du^2$ with the parameter $y_0\in B$. Then in projective coordinates 
$(\tau,s,y_0,z,\wx,u,\wz)$ near the right corner of $\ff$, where $\wx$ is the defining function of the front face, we set
\begin{align}\label{normal-heat-ops}
H_0(\tau, s, u, y_0, z, \widetilde{z}) := 
\wx^{-m} \phi(\wx) H_{\calE,y_0}(\tau, s, u, z, \widetilde{s}=1, \widetilde{u}=0, \widetilde{z}),
\end{align}
where $\phi$ is a smooth cutoff function, $\phi \equiv 1$ in an open neighborhood of 
$\wx=0$, and with compact support in $[0,1)$. By Proposition \ref{model-heat}, 
our initial parametrix $H_0$ is polyhomogeneous on $M^2_h$ and solves the heat equation to 
first order at the front face ff of $\mathscr{M}^2_h$. Moreover Proposition \ref{model-heat} asserts
\begin{align}\label{h-0-exp}
H_0 \sim \sum_{\gamma \in E}\sum_{j=0}^\infty s^{\gamma+2j} a_{\gamma,j}(H_0), \ s\to 0,
\end{align}
with each coefficient $a_{\gamma,j}(H_0)$ lying in the corresponding $\Delta_{F,y_0}$ eigenspace. 
The error of the initial parametrix $H_0$ is given by 
\begin{align*}
\beta^*(t\mathcal{L})H_0=\left(\beta^*(t\Delta^2) - \tau (\Delta^{\mathscr{C}(F)}_{s,y_0} 
+ \Delta^{\R^b}_u)^2\right) H_0=:P_0.
\end{align*}
The leading order term in the expansion of $\beta^*(t\mathcal{L})$ at $\td$ does not 
depend on the edge geometry and corresponds to the bi-Laplacian on a closed manifold.
Consequently, classical arguments allow to refine the initial parametrix such 
that the error term $P_0$ is vanishing to infinite order at $\td$, compare the corresponding 
discussion in (\cite{MazVer:ATM}, Section 3.2).
We need to understand the explicit structure of the asymptotic expansion of $P_0$ at $\ff$ and $\rf$.
By Definition \ref{def-admissible} (iv) we find
\begin{align}\label{delta-terms}
\beta^*\Delta = \wx^{-2}\left(\Delta^{\mathscr{C}(F)}_{s,y_0} + \Delta^{\R^b}_u\right) + \wx^{-1}L_1 + L_2,
\end{align} 
where $L_1$ is comprised of the derivatives $\partial_u\partial_z$ and $L_2$ consists of edge derivatives $\VV$. 
Consequently, we obtain after taking squares
\begin{align*}
\beta^*(t\Delta^2) &- \tau (\Delta^{\mathscr{C}(F)}_{s,y_0} 
+ \Delta^{\R^b}_u)^2 = \tau \wx \left((\Delta^{\mathscr{C}(F)}_{s,y_0} 
+ \Delta^{\R^b}_u)L_1 + L_1 (\Delta^{\mathscr{C}(F)}_{s,y_0} 
+ \Delta^{\R^b}_u)\right) \\
&+ \tau \wx^2 \left((\Delta^{\mathscr{C}(F)}_{s,y_0} 
+ \Delta^{\R^b}_u)L_2 + L_2 (\Delta^{\mathscr{C}(F)}_{s,y_0} 
+ \Delta^{\R^b}_u) + L_1^2\right) + \tau \wx^4 L_2^2.
\end{align*}
We now apply each of the summands above to the asymptotic expansion 
\eqref{h-0-exp} of $H_0$. Note that $\Delta^{\mathscr{C}(F)}_{s,y_0}$ 
annihilates each $s^{\gamma} a_{\gamma,0}(H_0), \gamma \in E$,
and lowers the $s$-order of $s^{\gamma+2j} a_{\gamma,j}(H_0)$ by $2$, if $j\geq 1$.
Consequently, we obtain as $s\to 0$
\begin{align*}
P_0 = \beta^*(t\mathcal{L}) H_0 
\sim \wx^{-m+1}\sum_{\gamma \in E} \sum_{j=0}^{\infty}
s^{\gamma+j-2} c_{\gamma,j}.
\end{align*}
The next step in the construction of the heat kernel involves adding a kernel $H_0'$ to $H_0$, 
such that the new error term is vanishing to infinite order at rf. 
In order to eliminate the term $s^k a$ in the asymptotic 
expansion of $P_0$ at rf, we only need to solve 
\begin{align}\label{indicial}
(-\partial_s^2- f s^{-1}\partial_s + s^{-2}\Delta_{F,y_0})^2 u = s^k (\tau^{-1}a).
\end{align}

This is because all other terms in the expansion of $t\mathcal{L}$ at rf lower the 
exponent in $s$ by at most one, while the indicial part lowers the exponent by two. 
The variables $(\tau, u, \wx, y_0, \wy, \wz)$ enter the equation only as parameters. 
The equation is solved by Mellin transform as well as spectral decomposition over $F$.
The solution is polyhomogeneous in all variables, including parameters and is of leading order $(k+4)$. 
Consequently, the correcting kernel $H_0'$ must be of leading order $2$ at rf 
and of leading order $(-m+1)$ at ff, since $P_0$ is of order $(-2)$ at rf and $(-m+1)$ at ff and
the defining function $\wx$ of the front face enters \eqref{indicial} only as a parameter. Hence
$$
H_1:=H_0+ H_0' \sim \sum_{\gamma \in E} \left(\sum_{j=0}^\infty s^{\gamma+2j} a_{\gamma,j}(H_1) 
+ \sum_{j=0}^\infty s^{\gamma+2+j} a'_{\gamma,j}(H_1)\right)\ \textup{as} \ s\to 0,
$$
where the coefficients $a_{\gamma,j}(H_1)$ each lie in the corresponding $\Delta_{F,y_0}$ eigenspace.
In the following correction steps the exact heat kernel is 
obtained from $H_1$ by an iterative correction procedure, 
adding terms of the form $H_1\circ (P_1)^k$, where $P_1:=t\mathcal{L}H_1$ 
is vanishing to infinite order at rf and td. This leads to an expansion
\begin{align}\label{h-exp}
\beta^* H \sim \sum_{\gamma \in E} \left(\sum_{j=0}^\infty s^{\gamma+2j} a_{\gamma,j}(\beta^* H) 
+ \sum_{j=0}^\infty s^{\gamma+2+j} a'_{\gamma,j}(\beta^* H)\right)\ \textup{as} \ s\to 0,
\end{align}
where the coefficients $a_{\gamma,j}(H)$ still lie in their corresponding $\Delta_{F,y_0}$ eigenspaces, 
and are of order $(-m)$ at the front face. The higher coefficients $a'_{\gamma,j}(\beta^* H)$
are of order $(-m+1)$ at ff.
\medskip

Note that in the construction above, we have only used 
Definition \ref{def-admissible} (i) and (iv). 
The assumption of Definition \ref{def-admissible} (iii) for admissible 
edge metrics is not required there, but plays an essential role in the argument that 
$H$ indeed takes values in $\dom(\Delta^2_{\mathscr{F}})$. First note that $H$ indeed 
takes values in $\dom(\Delta_{\mathscr{F}})$, since the expansion \eqref{h-exp}
satisfies the characterization of maximal solutions in Lemma \ref{max} and also the condition 
in \eqref{F} under the rescaling $\Phi$. \medskip

The conclusion that $\Delta H$ also takes values in $\dom(\Delta_{\mathscr{F}})$ is more intricate.
Recall \eqref{delta-terms}. It is easily checked from \eqref{h-exp} that 
$\wx^{-2}\left(\Delta^{\mathscr{C}(F)}_{s,y_0} + \Delta^{\R^b}_u\right) H$ indeed takes 
values in $\dom(\Delta_{\mathscr{F}})$ without any further assumptions. Application of $(\wx^{-1}L_1 + L_2)$
to $H$ preserves the expansion \eqref{h-exp}, however the coefficients in the expansion need not lie 
in the correct $\Delta_{F,y_0}$ eigenspaces, and hence we cannot deduce that $(\wx^{-1}L_1 + L_2) H$ 
maps into $\dom(\Delta_{\mathscr{F}})$ in general. \medskip

By condition (iii) of Definition \ref{def-admissible}, any $\gamma\neq 0$ is automatically $\gamma > 1$, 
and hence it then suffices to check whether the $s^0$ coefficient in the expansion of $(\wx^{-1}L_1 + L_2) H$ lies 
in the zero-eigenspace of $\Delta_F$, in other words is harmonic on fibres and hence constant in $z$. 
The leading term $s^0 a_{0,0}(\beta^* H)$ in the expansion of $\beta^*H$ is annihilated by 
$(s\partial_s), \partial_z$ and is increased by $\beta^* x\partial_y = s\partial_u + \wx s\partial_y$. 
Consequently, $(\wx^{-1}L_1 + L_2) H$ admits no $s^0$ term and hence trivially maps into $\dom(\Delta_{\mathscr{F}})$. 
\medskip

The kernels $H$ and $\Delta H$ thus both map into $\dom(\Delta_{\mathscr{F}})$
and hence by definition, $H$ indeed maps into $\dom (\Delta^2_{\mathscr{F}})$ and thus is 
the biharmonic heat kernel associated to $\Delta^2_{\mathscr{F}}$. 
\end{proof}

\section{Mapping properties of the biharmonic heat operator}\label{mapping-section}

In this section we prove boundedness and strong continuity of the 
biharmonic heat operator with respect to Banach spaces introduced 
in Definition \ref{defn:norms}.

\begin{thm}\label{bounded}
Let $(M^m,g)$ be an incomplete edge space with an admissible edge metric $g$. 
Fix the Friedrichs extension $\Delta_{\mathscr{F}}$ of the corresponding Laplace 
Beltrami operator. Put $\mathcal{D}_0=\langle\Delta\rangle$ and 
$\mathcal{D}=\langle\Delta,  x^{-1}\VV, x^{-1}\mathcal{V}'_e, \V\rangle$, 
where $\mathcal{V}'_e \subset \V$ consists locally of all edge vector fields where $x\partial_y$
is weighted with functions that are fibrewise constant. Then the associated biharmonic heat operator $e^{-t\Delta_{\mathscr{F}}^2}$
is a bounded map between the (weighted) Banach spaces
$$e^{-t\Delta_{\mathscr{F}}^2}: \mathscr{C}^{2k}_{\textup{ie}}(M, \mathcal{D}_0) 
\to t^{-1/4} \mathscr{C}^{2(k+1)}_{\textup{ie}}(M, \mathcal{D}).$$
\end{thm}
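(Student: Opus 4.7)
The plan is to leverage the polyhomogeneous structure of $\beta^* H$ on the blowup $\mathscr{M}^2_h$ established in Proposition \ref{heat}, together with the admissibility conditions from Definition \ref{def-feasible}, to control every summand in the norm on $\mathscr{C}^{2(k+1)}_{\textup{ie}}(M,\mathcal{D})$.

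The first step is a reduction to the case $k=0$. Since $u \in \mathscr{C}^{2k}_{\textup{ie}}(M,\mathcal{D}_0)$ provides $\Delta^j u \in \mathscr{C}^0_{\textup{ie}}(M)$ for $j = 0, \dots, k$, and since $\Delta^2_{\mathscr{F}}$ commutes with its heat semigroup on the relevant domain, we may use the identity
$$
X \circ \Delta^j \circ e^{-t\Delta^2_{\mathscr{F}}} u = X \circ e^{-t\Delta^2_{\mathscr{F}}} (\Delta^j u)
$$
to reduce the theorem to establishing
$X \circ e^{-t\Delta^2_{\mathscr{F}}} : \mathscr{C}^0_{\textup{ie}}(M) \to t^{-1/4} \mathscr{C}^0_{\textup{ie}}(M)$
for each $X \in \mathcal{D}$, with a constant uniform in $k$.

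The second and main step is this $L^\infty$ estimate. Writing the heat operator as integration against the kernel $H$ and pulling back via $\beta$, one estimates the resulting integral patch by patch on $\mathscr{M}^2_h$, using the projective coordinates \eqref{top-coord}, \eqref{right-coord}, and \eqref{d-coord}. Near the temporal diagonal \textup{td}, the parabolic scaling expressed by $\partial_x = t^{-1/4}\partial_S$ and $\partial_y = t^{-1/4}\partial_U$ extracts exactly the factor $t^{-1/4}$ when $X$ is a first-order edge vector field, while the integral over the bounded fibre variables $(S,U,Z)$ converges by the Euclidean-type decay that $\beta^*H$ inherits from the model kernel. Near the front face \textup{ff}, the homogeneity of order $-(1+b+f) = -m$ from Proposition \ref{model-heat} ensures that the fibre integral of $X\beta^* H$ is uniformly bounded after the $t^{-1/4}$ factor is extracted. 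Near the right and left faces \textup{rf}, \textup{lf}, the index set $E + \N_0 \subset [0,\infty)$ from Proposition \ref{heat} provides polyhomogeneity with non-negative leading exponents, and the leading $s^0$ coefficient lies in the $\sigma^2 = 0$ eigenspace of $\Delta_{F,y}$, hence is fibrewise constant.

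The main obstacle is the analysis of the weighted derivatives in $x^{-1}\VV$ and $x^{-1}\mathcal{V}'_e$ near the right face: the factor $x^{-1}\sim s^{-1}$ could destroy integrability of the leading $s^0$ term in the expansion. This is resolved by the same mechanism as at the end of the proof of Proposition \ref{heat}. The class $\mathcal{V}'_e$ is designed precisely to annihilate the fibrewise-constant leading coefficient, since both $\partial_z$ and the vector fields $x\partial_y$ with fibrewise-constant weights vanish on it; moreover, the admissibility condition in Definition \ref{def-feasible}(i) guarantees that the next non-trivial indicial root exceeds $1$. Consequently the next surviving term is $s^\gamma$ with $\gamma>1$, and $x^{-1}X$ produces a polyhomogeneous distribution with leading order $s^{\gamma-1}$ of positive exponent, which is bounded at \textup{rf}. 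Combining these face-by-face estimates gives the $t^{-1/4}$ mapping property on $\mathscr{C}^0_{\textup{ie}}(M)$, and together with the reduction in the first step proves the theorem.
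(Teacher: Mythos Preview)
Your overall strategy---reduce to $k=0$ via the commutation $\Delta^j e^{-t\Delta_{\mathscr{F}}^2} = e^{-t\Delta_{\mathscr{F}}^2}\Delta^j$, then control $\beta^*(XH)$ face by face on $\mathscr{M}^2_h$---is exactly what the paper does. The paper organizes the bookkeeping slightly differently, writing down the lift of the volume form in each coordinate patch and then reading off the single estimate $\beta^*(XH\cdot\dv) = O\bigl((\rho_{\ff}\rho_{\td})^{-2}\bigr)\leq Ct^{-1/4}$, but the content is the same.

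There is, however, a genuine gap in your treatment of the $\mathcal{V}'_e$ restriction. You claim that ``$x\partial_y$ with fibrewise-constant weights vanish on'' the leading $s^0$ coefficient $a_{0,0}$. This is false: in the projective coordinates \eqref{right-coord} one has $\beta^*(x\partial_y) = s\partial_u + \widetilde{x}\,s\,\partial_{\widetilde{y}}$, so $x\partial_y$ applied to $s^0 a_{0,0}$ gives $s\,\partial_u a_{0,0}$, which is $O(s)$ but not zero. More to the point, the \emph{same} computation works for $\phi(z)\,x\partial_y$ with \emph{any} coefficient $\phi$, so your argument does not distinguish $\mathcal{V}'_e$ from $\V$ at all. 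The paper's reason for restricting to $\mathcal{V}'_e$ is different and you miss it entirely: membership in $\mathscr{C}^0_{\textup{ie}}(M)$ requires not only boundedness but that the limit as $x\to 0$ be fibrewise constant. For $X = x^{-1}\phi(z)\,x\partial_y$ one gets $\beta^*(XH)\sim \phi(z)\,\widetilde{x}^{-1}\partial_u a_{0,0}\cdot s^0$ at $\rf$, so the leading coefficient depends on $z$ unless $\phi$ is fibrewise constant. This is why $\mathcal{V}'_e$ is introduced, and your proposal never checks the fibrewise-constancy of the output.

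A secondary point: you invoke the admissibility condition Definition~\ref{def-feasible}(i) to force the first nonzero $\gamma>1$, but the theorem as stated assumes only feasibility. The paper's proof asserts the estimate $\beta^*(XH)=O(\rho_{\rf}^0)$ directly from Proposition~\ref{heat} without mentioning condition~(i); whether that step actually goes through under feasibility alone is worth scrutinizing, but in any case you are importing a hypothesis not present in the statement.
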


\begin{proof}
First we prove the statement for $k=0$. 
The explicit structure of the heat kernel expansion in Proposition \ref{heat}
implies that for any $X\in \mathcal{D}$ applied to the biharmonic heat kernel $H$,
the lift $\beta^*(XH)$ admits the following behaviour near the front face of the
heat space $\mathscr{M}^2_h$ 
\begin{align}\label{XH}
\beta^*(X H) = O\left((\rho_\rf\rho_\lf)^0 (\rho_\ff \rho_\td)^{-m-2} \rho_\tf^{\infty}\right),
\end{align}
where $\rho_*$ denotes a defining function of a boundary face $*, *\in \{\rf, \lf, \ff, \td, \tf\}$.\medskip

Consider the lift of the volume form in the various projective coordinates near $\ff$.
We explify the transformation rules for the volume form near the lower
left, lower right and the top corner of the front face
\begin{equation}\label{volume1}
\begin{split}
\textup{near left corner:} \quad &\tau=\frac{t}{x^4}=\rho_\tf, \ s=\frac{\wx}{x}=\rho_\lf, \ 
u=\frac{y-\widetilde{y}}{x}, \ x=\rho_\ff, \ y, \ z, \ \widetilde{z}, \\
&\beta^*(\wx^f \, d\wx \, \dvb(\wx))=h \cdot x^{m} \, s^f \, ds\, du\, d\wz, \\
\textup{near right corner:} \quad &\wtau=\frac{t}{\wx^4}=\rho_\tf, \ \ws=\frac{x}{\wx}=\rho_\rf, \ \wu=\frac{y-\widetilde{y}}{\wx}, 
\ z, \ \wx=\rho_\ff, \ \wy, \ \widetilde{z}, \\ &\beta^*(\wx^f \, d\wx \, \dvb(\wx))=
h \cdot \wx^{m} \, \widetilde{\tau}^{-1} \, d\widetilde{\tau}\, d\widetilde{u}\, d\wz, \\
\textup{near top corner:} \quad & \rho=t^{1/4}=\rho_\ff, \  \xi=\frac{x}{\rho}=\rho_\rf, \ 
\widetilde{\xi}=\frac{\widetilde{x}}{\rho}=\rho_\lf, \ u=\frac{y-\widetilde{y}}{\rho}, \ y, \ z, \ \widetilde{z}, \\
&\beta^*(\wx^f \, d\wx \, \dvb(\wx))=h \cdot \rho^{m} \, \widetilde{\xi}^f \, d\widetilde{\xi}\, du\, d\wz.
\end{split}
\end{equation}
The projective coordinates and the transformation rule for the volume form where 
the front and the temporal diagonal faces meet, is as follows
\begin{equation}\label{volume2}
\begin{split}
&\eta=\frac{t^{1/4}}{x}=\rho_\td, \ S=\frac{x-\wx}{t^{1/4}}, \ U=\frac{y-\wy}{t^{1/4}}, \ 
Z=\frac{x(z-\wz)}{t^{1/4}}, \ x=\rho_\ff, \ y, \ z, \\
&\beta^*(\wx^f d\wx \dvb(\wx))=h \cdot x^{m} \eta^m dS\, dU\, dZ.
\end{split}
\end{equation}
When we combine the asymptotics of $\beta^*(X H)$ in \eqref{XH} with the behaviour of the volume form 
in the various projective coordinates \eqref{volume1} and \eqref{volume2}, we find that $\beta^*(XH \wx^f d\wx \dvb(\wx))$
has a singular behaviour of $(\rho_\ff \rho_\td)^{-2} \leq c t^{-1/4}$. Consequently, 
we may estimate for $X\in \mathcal{D}$ and any continuous function, in particular for any $u \in \mathscr{C}_{\textup{ie}}^0(M)$
$$
\|X e^{-t\Delta_{\mathscr{F}}^2} u\|_\infty \leq C t^{-1/4} \|u\|_\infty,
$$ 
for some constant $C>0$ independent of $u$. Furthermore, by Proposition \ref{heat},
$\beta^*XH \sim a_0 \rho_\rf^0$, as $\rho_\rf \to 0$ for $X \in \mathcal{D}$, where $a_0$ is fibrewise 
constant, i.e. independent of $z\in F$. Here the fact that for $X\in x^{-1}\mathcal{V}'_e$ its $\partial_y$
component is weighted with a fibrewise constant function, is essential. Hence, indeed 
$X e^{-t\Delta_{\mathscr{F}}^2} u \in t^{-1/4}\mathscr{C}_{\textup{ie}}^0(M)$. 
This proves the statement for $k=0$. The general statement follows from the fact that
due to \eqref{max-F}, $\ck \subset \dom (\Delta_\mathscr{F}^k)$ and for any $u \in \dom (\Delta_\mathscr{F}^k)$, 
$\Delta^k e^{-t\Delta_{\mathscr{F}}^2} u = e^{-t\Delta_{\mathscr{F}}^2}  \Delta^k  u$, 
by uniqueness of solutions to the biharmonic heat equation with fixed initial condition.\medskip

Finally note that while $t^{1/4}X e^{-t\Delta_{\mathscr{F}}^2} u$
is continuous even for $X \in x^{-1} \V$, it need not remain fibrewise constant at $x=0$ 
since in general $X$  may include vector fields weighted with $z$-dependent functions.
Hence $x^{-1} \V$ is replaced by $x^{1}\mathcal{V}'_e$ in $\mathcal{D}$, where $\mathcal{V}'_e\subset \V$ consists locally 
of linear combinations of $\{x\partial_x, x\partial_y, \partial_z\}$, weighted with functions that are fibrewise constant. 
\end{proof}

\begin{remark}
A particular property\footnote{It suffices that
$u$ is continuously differentiable to first order.} 
of $u \in \mathscr{C}^2_{\textup{ie}}(M, \mathcal{D})$ with 
$\{\partial_x, \partial_y, x^{-1}\partial_z\} \subset \mathcal{D}$ is worth noticing.
In the singular neighborhood of the edge, the distance defined by the Riemannian edge metric $g$ is equivalent to 
$$
d((x,y,z), (\wx,\wy,\wz)) = \left(|x-\wx|^2 + |y-\wy|^2 + (x+\wx)^2|z-\wz|^2\right)^{1/2}.
$$
In local coordinates near the edge we find
\begin{align*}
&u(x,y,z) - u(\wx, \wy, \wz) \\ =& \, u(x, y, z) - u(\wx, y, z) +
u(\wx, y, z) - u(\wx, \wy, z) + u(\wx, \wy, z) - u(\wx, \wy, \wz) \\
=& \, \partial_x u(\xi, y, z) (x-\wx) + \partial_y u(\wx, \gamma, z) (y-\wy) + \wx^{-1}\partial_z u(\wx, \wy, \zeta) \, \wx \, (z-\wz).
\end{align*}
Consequently we obtain
\begin{align*}
|u(x,y,z) - u(\wx, \wy, \wz)|  &\leq ||u||_2 \left(|x-\wx| + |y-\wy| + (x+\wx)|z-\wz|\right) \\
&\leq \sqrt{2} \, ||u||_2 \, d((x,y,z), (\wx,\wy,\wz)).
\end{align*}
In other words, $u \in \mathscr{C}^2_{\textup{ie}}(M, \mathcal{D})$ is automatically Lipschitz with respect to $d$.
\end{remark}

\begin{remark}\label{ie}
We point out that Theorem \ref{bounded} holds also when the basic space $\mathscr{C}^{0}_{\textup{ie}}(M)$
is replaced by the Banach space of sections continuous up to $x=0$, without the requirement of
being fibrewise constant at $\partial M$. Also, we may set $\mathcal{D} = \langle\Delta, x^{-1}\VV, x^{-1}\V, \V\rangle$.
The use of the refined space $\mathscr{C}^{0}_{\textup{ie}}(M)$ and the restriction of $x^{-1}\V$
to $x^{-1}\mathcal{V}'_e$ in $\mathcal{D}$ becomes however crucial in Theorem \ref{strong}.
\end{remark}

\section{Short time existence of semi-linear equations of fourth order}\label{short-section}

In this section we explain how the mapping properties of the biharmonic heat operator and 
its strong continuity yields short-time existence of solutions to certain semilinear equations 
of fourth order. 

\subsection{The Banach fixed point argument}

The underlying idea is based on a fixed point argument in the following theorem.

\begin{theorem} \label{taylor} \cite[Proposition 1.1 in \S 15]{Tay:PDE}
Let $P$ be some, possibly unbounded, linear operator in a Hilbert space $H$, bounded from below.
Suppose that $V,W\subset H$ are Banach spaces, such that $P:V\to W$ is bounded and moreover
\begin{enumerate}
\item $e^{-t P}: V \longrightarrow V$ is a strongly continuous semigroup, for $t \geq 0$.
\item $Q: V \longrightarrow W$ is locally Lipschitz,
\item $e^{-t P}: W \longrightarrow t^{-\gamma}V$ bounded 
for some $\gamma < 1$.
\end{enumerate}
Then for any $u_0 \in W$, the initial value problem 
\begin{equation*}
\partial_t u - P u = Q(u), \ u(0) = u_0 \in W
\end{equation*}
has a unique solution $u \in C([0,T], V)$\footnote{Moreover, by \cite[Proposition 1.2 in \S 15]{Tay:PDE},
$u \in C^\infty ((0,T] \times M)$.}, for some $T > 0$,
where $T$ may be estimated from below in terms of $||u_0||_{V}$.
The solution $u$ is the fixed point of the operator $F:V\to V$ with
\[ F(u) = e^{-t P} u_0 + \int_0^t e^{-(t-s) P} Q(u) ds. \]
\end{theorem}

\subsection{Strong continuity of the biharmonic heat operator}

As seen from Theorem \ref{taylor}, 
existence of solutions to certain semi-linear fourth order equations
crucially depends on the strong continuity property.  Strong continuity of the 
biharmonic heat operator with respect to the Banach space $\ck$ is the 
content of the next theorem. Note that for strong continuity we will choose a different
space $\mathcal{D}'$ of allowable operators, smaller than in Theorem \ref{bounded}.
Beforehand we note the following well-known functional analytic result.

\begin{lemma}\label{funkana}
Let $D$ be a self-adjoint non-negative unbounded operator in a Hilbert space $H$.
Then the following is true.
\begin{enumerate}
\item A solution to $(\partial_t + D^2)u=0$, that is continuously differentiable in $t>0$,
with $u(t)\in \dom(D^2)$ for $t>0$
and $\lim_{t\to 0} u(t) = u_0\in H$, is unique, for any fixed $u_0\in H$.
\item For any $u_0\in \dom (D)$, we have $De^{-tD^2}u_0=e^{-tD^2}Du_0$.
\end{enumerate}
\end{lemma}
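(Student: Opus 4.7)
The plan is to treat part (1) by an energy/monotonicity argument and part (2) by an appeal to the spectral theorem for the self-adjoint non-negative operator $D$.

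For (1), I would take two solutions $u_1, u_2$ satisfying the hypotheses with the same initial datum $u_0$, set $v := u_1 - u_2$, and consider the real-valued function $\phi(t) := \|v(t)\|_H^2$. Since $v$ is continuously differentiable into $H$ and takes values in $\dom(D^2)$ for $t > 0$, with $\partial_t v = -D^2 v$, I can differentiate and use self-adjointness of $D$ to obtain
\begin{equation*}
\phi'(t) = 2\,\textup{Re}\,\langle \partial_t v, v\rangle_H = -2\,\textup{Re}\,\langle D^2 v, v\rangle_H = -2\,\|Dv\|_H^2 \leq 0,
\end{equation*}
so $\phi$ is non-increasing on $(0,T]$. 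Combined with $\lim_{t\to 0^+} \phi(t) = 0$ from the initial condition and continuity of $\phi$ up to $t=0$, this forces $\phi \equiv 0$ and hence $u_1 \equiv u_2$.

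For (2), the natural tool is the spectral resolution $D = \int_0^\infty \lambda\, dE(\lambda)$, from which $D^2 = \int_0^\infty \lambda^2\, dE(\lambda)$ and $e^{-tD^2} = \int_0^\infty e^{-t\lambda^2}\, dE(\lambda)$ via the Borel functional calculus. Since $\lambda$ and $e^{-t\lambda^2}$ are commuting Borel functions of a single self-adjoint operator, the operators $D$ and $e^{-tD^2}$ commute on the appropriate domain. For $u_0 \in \dom(D)$, the spectral measure $\mu_{u_0}$ on $[0,\infty)$ satisfies $\int \lambda^2\, d\mu_{u_0}(\lambda) < \infty$; since $|\lambda e^{-t\lambda^2}| \leq \lambda$ pointwise, the product is square-integrable against $\mu_{u_0}$, so $e^{-tD^2} u_0 \in \dom(D)$ and the intertwining identity $D e^{-tD^2} u_0 = e^{-tD^2} D u_0$ follows.

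The main subtlety is merely the justification of differentiating $\|v(t)\|_H^2$ in (1), which is immediate from the $C^1$-into-$H$ hypothesis on $v$ combined with the sesquilinearity of the inner product; no genuine technical obstacle arises, and part (2) reduces to pure functional-calculus bookkeeping.
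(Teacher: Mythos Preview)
Your proposal is correct but takes a different route from the paper in both parts. For (i), the paper fixes $t>0$ and shows that $s\mapsto e^{-(t-s)D^2}u(s)$ is constant on $[0,t]$ by differentiating in $s$, which simultaneously identifies the solution as $u(t)=e^{-tD^2}u_0$; your energy argument via $\phi(t)=\|v(t)\|_H^2$ is more elementary and gives uniqueness directly, though it does not produce the explicit representation formula (which the paper later uses). For (ii), the paper avoids invoking the spectral theorem: it first commutes $D$ past the resolvent $(D^2-\lambda)^{-1}$ by a direct algebraic computation, and then passes to the heat operator via the strong limit $e^{-tD^2}=\lim_{n\to\infty}(I+tD^2/n)^{-n}$ together with closedness of $D$. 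Your functional-calculus argument is the standard and cleaner route once one accepts the Borel calculus machinery; the paper's resolvent approach is more hands-on and makes the domain considerations explicit at each step. Both pairs of arguments are valid and essentially interchangeable for this lemma.
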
 

\begin{proof}
(i) For $s\in (0,t]$ we compute
\begin{align*}
\partial_s \, e^{-(t-s)D^2}u(s) &= - \partial_t \, e^{-(t-s)D^2}u(s) + 
e^{-(t-s)D^2} \partial_s \, u(s)
\\&= e^{-(t-s)D^2}D^2u(s)-e^{-(t-s)D^2}D^2u(s)=0.
\end{align*}
Consequently, $e^{-(t-s)D^2}u(s)$ is constant for $s\in (0,t]$. 
Since $e^{-tD^2}$ converges to Id in the Hilbert space norm as $t\to 0$ 
and, by assumption, $u(t)$ is continuous at $t=0$, we find that 
$e^{-(t-s)D^2}u(s)$ is constant for $s\in [0,t]$. 
Considering the limit of $e^{-(t-s)D^2}u(s)$ as $s\to t$ and as $s\to 0$
proves for any $u_0\in H$ $$u(t)=e^{-tD^2}u_0.$$
(ii) Consider $\lambda\in \textup{Res}(D^2)$ in the resolvent set of $D^2$.
Then for any $u_0\in \dom (D)$, the resolvent $(D^2-\lambda)^{-1}u_0\in \dom(D^2)$
and we compute
\begin{align*}
(D^2-\lambda) D (D^2-\lambda)^{-1}u_0 = &D (D^2-\lambda) (D^2-\lambda)^{-1}u_0 =  Du_0, \\
\Rightarrow \, &D (D^2-\lambda)^{-1}u_0 = (D^2-\lambda)^{-1} D u_0.
\end{align*}
The statement now follows by closedness of $D$ and definition 
of the heat operator as the strong limit 
$e^{-tD^2} := \lim_{n\to \infty} \left(I + tD^2/n\right)^{-n}.$
 \end{proof}

\begin{thm}\label{strong}
Let $(M^m,g)$ be an incomplete edge space with an admissible edge metric $g$. 
Fix the Friedrichs extension $\Delta_{\mathscr{F}}$ of the corresponding Laplace-Beltrami operator. 
Put $\mathcal{D}'=\langle\Delta,  \VV, \V \rangle$. 
Then the associated biharmonic heat operator $e^{-t\Delta_{\mathscr{F}}^2}$
is a strongly continuous bounded map between Banach spaces
$$e^{-t\Delta_{\mathscr{F}}^2}: \mathscr{C}^{2k}_{\textup{ie}}(M, \mathcal{D}') \to 
\mathscr{C}^{2k}_{\textup{ie}}(M, \mathcal{D}').$$
\end{thm}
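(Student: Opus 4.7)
My plan is to prove the theorem in three steps: uniform boundedness of the semigroup, strong continuity on $\mathscr{C}^0_{\textup{ie}}(M)$ in the sup norm, and a commutator argument that promotes the sup-norm continuity to the full $\mathscr{C}^{2k}_{\textup{ie}}$ norm. For the uniform estimate $\|e^{-t\Delta^2_{\mathscr{F}}}u\|_{2k}\leq C\|u\|_{2k}$ on $t\in[0,T]$, I would first invoke Lemma \ref{funkana}(ii) iteratively to push all powers $\Delta^j$ past the semigroup, so that it suffices to control $\|Xe^{-t\Delta^2_{\mathscr{F}}}v\|_\infty$ for $v=\Delta^j u$ and $X\in\mathcal{D}'=\{\Delta,\VV,\V\}$. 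For $X\in\V$ or $X\in\VV$ I would use the splitting
\[
X\,e^{-t\Delta^2_{\mathscr{F}}}v \;=\; e^{-t\Delta^2_{\mathscr{F}}}(Xv) \;+\; [X,e^{-t\Delta^2_{\mathscr{F}}}]v,
\]
in which the first summand is bounded by $\|Xv\|_\infty\leq\|u\|_{2k}$ together with the $\mathscr{C}^0_{\textup{ie}}$-boundedness of the semigroup. This last property is read off from Proposition \ref{heat}: for $X=\textup{id}$ the orders $(-m)$ of $\beta^*H$ at $\ff$ and $\td$ are exactly cancelled by the volume-form factors $\rho_\ff^m\rho_\td^m$ in \eqref{volume1}--\eqref{volume2}, while infinite-order vanishing at $\tf$ yields a uniformly $L^1$ kernel.

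For the strong continuity $\|e^{-t\Delta^2_{\mathscr{F}}}v-v\|_\infty\to 0$ on $v\in\mathscr{C}^0_{\textup{ie}}(M)$, I would use conservation $e^{-t\Delta^2_{\mathscr{F}}}\!\cdot 1 = 1$ (which follows from $\Delta\cdot 1=0$ and the uniqueness statement in Lemma \ref{funkana}(i)) to write
\[
e^{-t\Delta^2_{\mathscr{F}}}v(p)-v(p) \;=\; \int_M \beta^*H(t,p,q)\bigl(v(q)-v(p)\bigr)\,d\textup{vol}(q).
\]
Splitting the integration at $d(p,q)=\delta$, the far region contributes $O(t^\infty)\|v\|_\infty$ by the infinite-order vanishing of $\beta^*H$ at $\tf$, while the near region is handled by continuity of $v$ on the compactification $\widetilde{M}$ with fibrewise constant values at $x=0$, combined with the uniform $L^1$-bound on the kernel, producing uniform smallness via a standard $\varepsilon$-$\delta$ argument.

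Finally, for general $u\in\mathscr{C}^{2k}_{\textup{ie}}(M,\mathcal{D}')$ I would decompose
\[
X\Delta^j\bigl(e^{-t\Delta^2_{\mathscr{F}}}u-u\bigr) \;=\; \bigl(e^{-t\Delta^2_{\mathscr{F}}}(X\Delta^j u) - X\Delta^j u\bigr) \;+\; [X,e^{-t\Delta^2_{\mathscr{F}}}]\Delta^j u,
\]
handling the first summand by the previous step applied to $w:=X\Delta^j u\in\mathscr{C}^0_{\textup{ie}}(M)$, and the commutator via Duhamel:
\[
[X,e^{-t\Delta^2_{\mathscr{F}}}] \;=\; -\int_0^t e^{-(t-s)\Delta^2_{\mathscr{F}}}\,[X,\Delta^2]\,e^{-s\Delta^2_{\mathscr{F}}}\,ds.
\]
Writing $e^{-s\Delta^2_{\mathscr{F}}}$ as a twofold composition via the semigroup property, with each factor producing two orders of edge regularity at price $s^{-1/4}$ by Theorem \ref{bounded}, allows the fourth-order edge operator $[X,\Delta^2]$ to act on the smoothed function with $L^\infty$-norm at most $Cs^{-1/2}\|u\|_{2k}$; the remaining Duhamel integral is then $O(t^{1/2})\to 0$. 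The main obstacle will be verifying that the Duhamel integrand lies in $\mathscr{C}^0_{\textup{ie}}(M)$ and not merely in $L^\infty$, and in particular that $Xe^{-t\Delta^2_{\mathscr{F}}}v$ satisfies the fibrewise-constant condition at $x=0$; this requires the same leading-order tracking as in the last paragraphs of the proof of Proposition \ref{heat}, invoking Definition \ref{def-feasible}(i) to force non-constant-in-$z$ contributions into eigenspaces of $\Delta_{F,y}$ of order $\gamma>1$. The absence of $x^{-1}$ weights in $\mathcal{D}'$ is precisely what keeps the commutator orders at the front face absorbable by the semigroup smoothing.
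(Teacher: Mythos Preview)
Your reduction via Lemma \ref{funkana} and the $k=0$ strong-continuity argument (stochastic completeness plus near/far splitting) match the paper exactly. The divergence is in how you handle the edge derivatives $X\in\{\V,\VV\}$ for $k=1$: you propose the Duhamel identity
\[
[X,e^{-t\Delta^2_{\mathscr{F}}}]=-\int_0^t e^{-(t-s)\Delta^2_{\mathscr{F}}}\,[X,\Delta^2]\,e^{-s\Delta^2_{\mathscr{F}}}\,ds,
\]
whereas the paper integrates by parts in the projective coordinates \eqref{d-coord} on $\mathscr{M}^2_h$, moving $X$ from the kernel onto $u$ directly.

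There is a genuine gap in your route. The estimate $\|[X,\Delta^2]\,e^{-s\Delta^2_{\mathscr{F}}}v\|_\infty\leq Cs^{-1/2}\|v\|_\infty$ requires that $[X,\Delta^2]$ act boundedly from $\mathscr{C}^4_{\textup{ie}}(M,\mathcal{D})$ to $\mathscr{C}^0_{\textup{ie}}(M)$. But by Definition \ref{defn:norms} the norm $\|\cdot\|_{2k}$ controls only the \emph{specific} compositions $X'\circ\Delta^j$ with $X'\in\mathcal{D}=\{\Delta,\,x^{-1}\VV,\,x^{-1}\mathcal{V}'_e,\,\V\}$, not arbitrary edge differential operators with the same $x$-weights. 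Writing $\Delta=x^{-2}P$ with $P$ a second-order edge operator, one finds $[V,\Delta]=x^{-2}Q$ for $V\in\V$ and some second-order edge operator $Q$ that is \emph{not} a scalar multiple of $P$; terms such as $x^{-2}\partial_z^2$ or $x^{-2}(x\partial_x)^2$ arise, and neither is dominated by $\{\Delta,\,x^{-1}\VV\}$ alone. Iterating to $[V,\Delta^2]$ produces $x^{-4}$-weighted fourth-order edge operators that need not decompose into the allowed building blocks, so Theorem \ref{bounded} gives you no handle on them. For $X\in\VV$ the situation is worse: $[X,\Delta^2]$ is fifth order, so even the proposed twofold smoothing (yielding $\mathscr{C}^4_{\textup{ie}}$) is one order short, and the same structural obstruction persists at the next level.

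The paper's integration-by-parts argument avoids this entirely: in the coordinates \eqref{d-coord} the lift $\beta^*(X)$ splits into a piece $\eta^{-1}\partial_S$ (or $\eta^{-1}\partial_U$, $\eta^{-1}\partial_Z$) that is integrated by parts onto $u(\wx,\wy,\wz)$, plus pieces tangent to $\td$ that do not worsen the kernel order. After subtraction of the corresponding expression with $u(x,y,z)$ (via stochastic completeness), the only regularity needed on $u$ is $Xu\in\mathscr{C}^0_{\textup{ie}}(M)$, which is exactly the hypothesis $u\in\mathscr{C}^2_{\textup{ie}}(M,\mathcal{D}')$. This is why the paper's method closes with $\mathcal{D}'=\{\Delta,\VV,\V\}$ and never needs to confront the algebraic structure of $[X,\Delta^2]$.
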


\begin{proof}
By \eqref{max-F}, we have $\mathscr{C}^{2k}_{\textup{ie}}(M, \mathcal{D}')  
\subset \dom (\Delta_\mathscr{F}^k)$ and hence for any 
$u \in \mathscr{C}^{2k}_{\textup{ie}}(M, \mathcal{D}') $ we infer by the previous Lemma \ref{funkana},
$\Delta^k e^{-t\Delta_{\mathscr{F}}^2} u = e^{-t\Delta_{\mathscr{F}}^2}  \Delta^k  u$.
This reduces the statement to $k=1$ and $k=0$. Proof of both cases requires stochastic completeness of the biharmonic 
heat kernel, which we explain below. Solutions to the initial value problem
\begin{align*}
\partial_t u+\Delta^2 u=0, \ u(0)=u_0, \ u(t) \in \dom (\Delta^2_\mathscr{F}), \ t > 0,
\end{align*}
are unique and in fact given by $u(t)=e^{-t\Delta_{\mathscr{F}}^2} u_0\in \dom (\Delta^2_\mathscr{F})$.
We have observed in subsection \ref{friedrichs-subsection} that, reversing eventual rescaling, the Friedrichs domain contains 
precisely those elements in the maximal domain whose leading 
term in the weak expansion as $x\to 0$ is given by $x^0$, with a fibrewise constant coefficient, 
cf. \eqref{max-F}. Consequently, the constant function $\one \in \dom(\Delta_\mathscr{F})$. Moreover, 
$\Delta \one = 0 \in \dom(\Delta_\mathscr{F})$ and consequently $\one\in \dom (\Delta^2_\mathscr{F})$. 
The constant function $\one$ solves the heat equation and hence we deduce by uniqueness 
of solutions the \emph{stochastic completeness}
\begin{align}\label{stoch-compl}
e^{-t\Delta_{\mathscr{F}}^2} \one \equiv \int_M H( t, p,\widetilde{p}) \dv(\widetilde{p}) = \one, \; \mbox{for all} \; p \in M, t > 0. 
\end{align}  

This reduces the case to $k=0,1$. We can now prove the statement for $k=0$, basically repeating the arguments 
in (\cite{BDV:THE}) where the classical proof of strong continuity of the heat operator on closed (non-singular) manifolds 
is adapted to the present setup. Using stochastic completeness we find
\begin{align*}
(e^{-t\Delta_{\mathscr{F}}^2} u)(p, t) - u(p) &= \int_M H\left( t, p, \widetilde{p} \right) (u(\widetilde{p})-u(p)) \dv(\widetilde{p}).
\end{align*}
Consider the distance function $d(p,\widetilde{p})$ induced by the incomplete edge metric $g$. 
In the singular neighborhood of the edge, the distance is equivalent to 
$$
d((x,y,z), (\wx,\wy,\wz)) = \left(|x-\wx|^2 + |y-\wy|^2 + (x+\wx)^2|z-\wz|^2\right)^{1/2}.
$$
Note that $u\in \mathscr{C}^0_{\textup{ie}}(M)$ is continuous with respect to the topology
induced by the Riemannian metric $g$ and hence by the distance function $d$. 
Hence for any $\epsilon >0$ there exists some $\delta(\epsilon) >0$, such that for 
$d(p,\widetilde{p}) \leq \delta(\epsilon)$ one has $|u(p)-u(\widetilde{p})| \leq \epsilon$. 
For any given $\epsilon >0$ we separate the integration region into 
\begin{eqnarray}\label{int_regions}
 M^+_\epsilon & := & \{\widetilde{p} \mid d(p,\widetilde{p}) \geq \delta(\epsilon)\}, \notag \\
 M^-_\epsilon & := & \{\widetilde{p} \mid d(p,\widetilde{p}) \leq \delta(\epsilon)\}.
\end{eqnarray}
Employing continuity of $u$ we find
\begin{equation*}
\begin{split}
 &|e^{-t\Delta_{\mathscr{F}}^2} u - u| = \left| \int_M H\left( t, p, \widetilde{p} \right) (u(\widetilde{p})-u(p)) \dv(\widetilde{p}) \right| \\
&\leq \int_{M^+} |H\left( t, p, \widetilde{p} \right)| \cdot |u(\widetilde{p})-u(p)| \dv(\widetilde{p}) 
+ \int_{M^-} |H\left( t, p, \widetilde{p} \right)| \cdot |u(\widetilde{p})-u(p)| \dv(\widetilde{p}) \\
&\leq  \, 2 \, \frac{t^{1/4}}{\delta(\epsilon)} \, \|u\|_{0}\int_{M^+} |H\left( t, p, \widetilde{p} \right)| \,
\frac{d(p,\widetilde{p})}{t^{1/4}} \dv(\widetilde{p})
+ \epsilon \, \int_{M^-} |H\left( t, p, \widetilde{p} \right)| \dv(\widetilde{p}). 
\end{split}
\end{equation*}
It may be checked in the various projective coordinates around the 
front face in the heat space $\mathscr{M}^2_h$, that $\beta^*(|H| \dv)$ and 
$\beta^*(d(p,\widetilde{p}) t^{-1/4}) \rho_\tf$ is bounded. Since $\beta^*|H|$
is vanishing to infinite order at $\tf$, we find that both integrals above are bounded uniformly in $(t,p, \epsilon)$.
Therefore we obtain
\begin{align*}
 \| e^{-t \Delta } u - u \|_0 \leq C \, \left(\frac{t^{1/4}}{\delta(\epsilon)} \, \|u\|_{0} + \epsilon\right).
\end{align*}
Thus, for any given $\epsilon >0$ we can estimate $\| e^{-t \Delta } u - u \|_0 \leq 2\epsilon C$ for 
$t^{1/4} < \epsilon \delta(\epsilon) / \|u\|_{0}$.
This proves strong continuity of the biharmonic heat operator on $\mathscr{C}^0_{\textup{ie}}(M)$.
It remains to prove the case $k=1$. Strong continuity 
of the biharmonic heat operator with respect to $\mathscr{C}^2_{\textup{ie}}(M)$ means
$\| X(e^{-t\Delta_{\mathscr{F}}^2} u - u)\|_0 \to 0$ as $t\to 0$, for $u \in \mathscr{C}^2_{\textup{ie}}(M)$ and $X\in \mathcal{D}'$. 
If $X= \Delta$, this follows from the case $k=0$, since $\Delta e^{-t\Delta_{\mathscr{F}}^2} u = e^{-t\Delta_{\mathscr{F}}^2}  \Delta u$
for $u \in \mathscr{C}^2_{\textup{ie}}(M)\subset \dom (\Delta_\mathscr{F})$. For $X \in \{\VV, \V\}$ 
the leading order of $\beta^*H$ at the front face is preserved under $X$, so that away from $\td$, the 
estimates reduce to the case $k=0$. \medskip

Near $\td$, a priori $XH$ admits $\rho_\td^{-2}$ singular behaviour at the temporal diagonal.
However, integration by parts, exactly as worked out in detail in \cite{BDV:THE} allows to 
pass derivatives $X$ to $u$, so that
the estimates again reduce to the case $k=0$. We write down the argument for completeness. 
The edge vector fields obey the following transformation rules in projective coordinates \eqref{d-coord} near the 
temporal diagonal 
\begin{align*}
\beta^*(x\partial_x)=-\eta \partial_{\eta} +\frac{1}{\eta}\partial_S + Z \partial_Z + x\partial_x, \ 
\beta^*(x\partial_y)=\frac{1}{\eta}\partial_U + x \partial_y, \ \beta^*(\partial_z)=\frac{1}{\eta}\partial_Z + \partial_z.
\end{align*}
By Proposition \ref{heat}
\begin{align*}
&\beta^*H(\eta, S, U, Z, x, y, z)=x^{-m}\eta^{-m} G(\eta, S, U, Z, x, y, z), \\
&\beta^*(\wx^{f} d\wx \dvb(\wx))=h (x\eta)^m (1-\eta S)^f dS\, dU\, dZ,
\end{align*}
where $G$ is bounded in its entries, and in fact infinitely vanishing as $|(S, U, Z)|\to \infty$, and 
$h = h\left( \eta, x(1-\eta S), y-x\eta U, z-\eta Z, x, y, z \right)$ is a bounded distribution on $\mathscr{M}^2_h$. 
We consider $|| x \partial_x ( e^{-t \Delta} u - u )||_0$. Using stochastic completeness of the heat kernel, we find
\begin{align*}
F&:=x \partial_x ( e^{-t \Delta} u - u)
=\int (x\partial_xH) u(\wx,\wy,\wz)\wx^{f} d\wx \dvb(\wx) \\ &- \int (x\partial_x) [H u(x,y,z) \wx^{f} d\wx \dvb(\wx)]
=:F_1-F_2.
\end{align*}
Next we transform to projective coordinates and integrate by parts in $S$, 
where the boundary terms lie away from the diagonal and hence are vanishing to infinite order for $t\to 0$ 
by the asymptotic behaviour of the heat kernel. Omitting these irrelevant terms,  we obtain
\begin{align*}
F_1&=\int \left(-\eta \partial_{\eta} +\frac{1}{\eta}\partial_S+Z\partial_Z +x\partial_x\right) \left[(x\eta)^{-m}G(\eta, S, U, Z, x, y, z)\right] \\ 
&\times u\left(x(1-\eta S), y-x\eta U, z-\eta Z\right) h (x\eta)^m (1-\eta S)^f dS\, dU\, dZ \\
&= \int \left[(-\eta \partial_{\eta} + Z \partial_Z + x\partial_x) (x\eta)^{-m} G\right]  \cdot u \, h(x\eta)^m (1-\eta S)^f dS\, dU\, dZ \\
&- \int G \left[\left(\frac{1}{\eta}\partial_S\right) u \right] h(1-\eta S)^f dS\, dU\, dZ\\
&- \int (x\eta)^{-m} G \cdot u \left[\left(\frac{1}{\eta}\partial_S \right) h(x\eta)^m(1-\eta S)^f\right] dS\, dU\, dZ.
\end{align*}
We perform similar computations for $F_2$:
\begin{align*}
F_2&=\int \left[(x\partial_x H)u(x,y,z) +H(x\partial_x u)\frac{}{}\right]\wx^f d\wx \dvb(\wx)\\
&=\int \left(\left[-\eta \partial_{\eta} +\frac{1}{\eta}\partial_S+Z\partial_Z +x\partial_x\right] 
(x\eta)^{-m}G\right) u\cdot h(x\eta)^m (1-\eta S)^f dS\, dU\, dZ \\ &+ \int G(\eta, S, U, Z, x, y, z)  
(x\partial_xu(x,y,z)) h (1-\eta S)^f dS\, dU\, dZ \\
&=  \int \left[(-\eta \partial_{\eta} + Z \partial_Z + x\partial_x) (x\eta)^{-m}G\right]  \cdot u\, h(x\eta)^m (1-\eta S)^f dS\, dU\, dZ\\
&- \int (x\eta)^{-m}G \cdot u \left[\left(\frac{1}{\eta}\partial_S \right) h(x\eta)^m(1-\eta S)^f\right] dS\, dU\, dZ\\
&+\int G(\eta, S, U, Z, x, y, z)  (x\partial_xu(x,y,z)) h (1-\eta S)^f dS\, dU\, dZ.
\end{align*}
Thus $F=F_1-F_2$ becomes
\begin{align*}
F&= \int \left[(-\eta \partial_{\eta} + Z \partial_Z + x\partial_x) (x\eta)^{-m}
G(\eta, S, U, Z, x, y, z)\frac{}{}\right] h(x\eta)^m (1-\eta S)^f \\ 
&\times  (u(x(1-\eta S), y-x\eta U, z-\eta Z)-u(x,y,z)) dS\, dU\, dZ \\
&- \int G(\eta, S, U, Z, x, y, z) \left[\left(\frac{1}{\eta}\partial_S\right) h \cdot (1-\eta S)^f\right] \\
&\times  (u(x(1-\eta S), y-x\eta U, z-\eta Z)-u(x,y,z))  dS\, dU\, dZ \\
&- \int G \left[\frac{1}{\eta}\partial_S u(x(1-\eta S), y-x\eta U, z-\eta Z) +
x\partial_x u(x,y,z) \right] h(1-\eta S)^f dS\, dU\, dZ.
\end{align*}
Now, each of the three integrals is estimated as above for $k=0$ by 
separating the integration region into $M^+_\epsilon$ and $M^-_\epsilon$ for 
any given $\epsilon >0$.  Note that in the final integral we use the fact that 
$u \in \mathscr{C}^2_{\textup{ie}}(M, \mathcal{D}')$ so that $\eta^{-1}\partial_Su$ and $x\partial_x u$
are bounded. Higher order and other edge derivatives may be estimated in a similar way. 
This proves strong continuity in general and as a trivial consequence boundedness of
the biharmonic heat operator. 
\end{proof}

\subsection{Existence and regularity of solutions}

We can now establish our final existence and regularity results. 

\begin{cor}
Let $(M,g)$ be an incomplete edge space with an admissible edge metric $g$. 
Put $\mathcal{D}'=\langle\Delta,  \VV, \V \rangle$ and $\mathcal{D}=\langle\Delta,  x^{-1}\VV, x^{-1}\mathcal{V}'_e, \V\rangle$, 
where $\mathcal{V}'_e \subset \V$ consists locally of linear combinations of $\{x\partial_x, x\partial_y, \partial_z\}$, 
where $x\partial_y$ is weighted with functions that are fibrewise constant. Assume $Q: \mathscr{C}^{2(k+1)}_{\textup{ie}}(M, \mathcal{D}')
\to \mathscr{C}^{2k}_{\textup{ie}}(M, \mathcal{D}')$ is locally Lipschitz. Then the semilinear equation
\begin{equation*}
\partial_t u + \Delta^2u = Q(u), \ u(0) = u_0 \in \mathscr{C}^{2(k+1)}_{\textup{ie}}(M, \mathcal{D}')
\end{equation*}
has a unique solution $u \in C([0,T], \mathscr{C}^{2(k+1)}_{\textup{ie}}(M, \mathcal{D})) \cap C^\infty((0,T]\times M)$, for some $T > 0$,
where $T$ may be estimated from below in terms of $||u_0||_{2(k+1)}$.
\end{cor}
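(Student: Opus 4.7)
The strategy is to apply the abstract fixed-point result Theorem \ref{taylor} with Hilbert space $H = L^2(M,g)$, operator $P = \Delta_{\mathscr{F}}^2$ (non-negative as the square of a self-adjoint operator), and Banach spaces
\[ V := \mathscr{C}^{2(k+1)}_{\textup{ie}}(M, \mathcal{D}'), \qquad W := \mathscr{C}^{2k}_{\textup{ie}}(M, \mathcal{D}'). \]
Boundedness of $P : V \to W$ is immediate from the defining norms. The three hypotheses of Theorem \ref{taylor} will each follow directly from a result already in place. Strong continuity $e^{-tP} : V \to V$ is Theorem \ref{strong} with parameter $k+1$. The local Lipschitz property of $Q : V \to W$ is a standing hypothesis of the corollary. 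For the smoothing estimate, I would use the inclusions of derivative collections $\mathcal{D}_0 \subset \mathcal{D}' \subset \mathcal{D}$, which induce continuous inclusions in the reversed direction
\[ W \hookrightarrow \mathscr{C}^{2k}_{\textup{ie}}(M, \mathcal{D}_0), \qquad \mathscr{C}^{2(k+1)}_{\textup{ie}}(M, \mathcal{D}) \hookrightarrow V. \]
Composing these with the bound supplied by Theorem \ref{bounded} yields
\[ e^{-tP} : W \longrightarrow t^{-1/4}\,V, \]
which is the third hypothesis with $\gamma = 1/4 < 1$.

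Theorem \ref{taylor} then produces, for each $u_0 \in W$, a unique fixed point $u \in C([0,T], V)$ of the Duhamel map
\[ F(u)(t) = e^{-tP} u_0 + \int_0^t e^{-(t-s)P} Q(u(s))\, ds, \]
with $T$ estimated from below in terms of $\|u_0\|_{2k}$, solving the stated semi-linear equation. The improved spatial regularity $u(t) \in \mathscr{C}^{2(k+1)}_{\textup{ie}}(M, \mathcal{D})$ for $t > 0$ is then read off directly from the integral equation: by Theorem \ref{bounded} the homogeneous term $e^{-tP}u_0$ lies in $\mathscr{C}^{2(k+1)}_{\textup{ie}}(M, \mathcal{D})$ with norm controlled by $t^{-1/4}\|u_0\|_{2k}$, while $Q(u(s)) \in W \hookrightarrow \mathscr{C}^{2k}_{\textup{ie}}(M, \mathcal{D}_0)$ gives $e^{-(t-s)P}Q(u(s)) \in \mathscr{C}^{2(k+1)}_{\textup{ie}}(M, \mathcal{D})$ with norm bounded by $C(t-s)^{-1/4}\|Q(u(s))\|_{2k}$, which is integrable in $s$ over $[0,t]$.

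The main obstacle is not analytic but notational: the argument is essentially a bookkeeping exercise that requires threading three different derivative collections $\mathcal{D}_0, \mathcal{D}', \mathcal{D}$ through the two mapping theorems in the correct order, since Theorem \ref{bounded} uses the smallest source collection $\mathcal{D}_0$ and the largest target collection $\mathcal{D}$, whereas Theorem \ref{strong} works only with the intermediate $\mathcal{D}'$. All genuine analysis -- the polyhomogeneous expansion of the heat kernel, stochastic completeness, the integration-by-parts trick near the temporal diagonal -- has already been performed in establishing Theorems \ref{bounded} and \ref{strong}; the corollary is then a clean application of the abstract semigroup framework.
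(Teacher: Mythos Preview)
Your proposal is correct and follows essentially the same approach as the paper: set $V=\mathscr{C}^{2(k+1)}_{\textup{ie}}(M,\mathcal{D}')$, $W=\mathscr{C}^{2k}_{\textup{ie}}(M,\mathcal{D}')$, verify the hypotheses of Theorem~\ref{taylor} via Theorems~\ref{bounded} and~\ref{strong} with $\gamma=1/4$, and then bootstrap the fixed point of the Duhamel map $F$ into the sharper space $\mathscr{C}^{2(k+1)}_{\textup{ie}}(M,\mathcal{D})$ using Theorem~\ref{bounded} once more. Your explicit tracking of the inclusions $\mathcal{D}_0\subset\mathcal{D}'\subset\mathcal{D}$ and the integrability of $(t-s)^{-1/4}$ makes the bookkeeping slightly more transparent than the paper's version, but the substance is identical.
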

\begin{proof}
Consider first a slightly smaller set of operators $\mathcal{D}'=\langle\Delta,  \VV, \V \rangle$ and set 
$W =\mathscr{C}^{2k}_{\textup{ie}}(M, \mathcal{D}'), V=\mathscr{C}^{2(k+1)}_{\textup{ie}}(M, \mathcal{D}')$.
In view of Theorem \ref{bounded} and Theorem \ref{strong}, the heat operator associated to $\Delta_{\mathscr{F}}^2$ 
satisfies the conditions of Theorem \ref{taylor} with $\gamma = 1/4$. Consequently, by 
Theorem \ref{taylor} the unique solution $u$ exists and lies in $\mathscr{C}^{2(k+1)}_{\textup{ie}}(M, \mathcal{D}')$.
This solution is the fixed point of the map $F:\mathscr{C}^{2(k+1)}_{\textup{ie}}(M, \mathcal{D}')\to 
\mathscr{C}^{2(k+1)}_{\textup{ie}}(M, \mathcal{D}')$ with
\[ F(u) = e^{-t \Delta_{\mathscr{F}}^2} u_0 + \int_0^t e^{-(t-s) \Delta_{\mathscr{F}}^2} Q(u) ds. \]
However, by Theorem \ref{bounded}, $F$ actually maps $\mathscr{C}^{2(k+1)}_{\textup{ie}}(M, \mathcal{D}') \subset \mathscr{C}^{2(k+1)}_{\textup{ie}}(M, \mathcal{D}_0)$
to $\mathscr{C}^{2(k+1)}_{\textup{ie}}(M, \mathcal{D})$. Consequently, $u \in \mathscr{C}^{2(k+1)}_{\textup{ie}}(M, \mathcal{D})$, 
with a slightly better regularity, as claimed. 
\end{proof}

We now apply this general existence result to the example of
the Cahn-Hilliard equation on an incomplete edge manifold.
We define
\begin{align*}
Q: \ckk \to \ck, \quad Q(u) := \Delta (u-u^3).
\end{align*}
The mapping $Q$ is indeed locally Lipschitz, since for any $u,v\in \ckk$
\begin{align*}
\|Q(u-v)\|_{2k} &\leq \|\Delta (u-v)\|_{2k} + \|\Delta (u-v)^3\|_{2k}\\
&\leq \|u-v\|_{2(k+1)}\left( 1 + \|u-v\|_{2(k+1)}^2\right).
\end{align*}
We hence arrive at our final result.
\begin{cor}
Let $(M,g)$ be an incomplete edge space with an admissible edge metric $g$. 
Put $\mathcal{D}'=\langle\Delta,  \VV, \V \rangle$ and $\mathcal{D}=\langle\Delta,  x^{-1}\VV, x^{-1}\mathcal{V}'_e, \V\rangle$, 
where $\mathcal{V}'_e \subset \V$ consists locally of linear combinations of $\{x\partial_x, x\partial_y, \partial_z\}$, 
where $x\partial_y$ is weighted with functions that are fibrewise constant. Then the Cahn-Hilliard equation
\begin{equation*}
\partial_t u + \Delta^2u + \Delta (u-u^3)= 0, \ u(0) = u_0 \in \mathscr{C}^{2(k+1)}_{\textup{ie}}(M, \mathcal{D}')
\end{equation*}
has a unique solution $u \in C([0,T], \ckk)\cap C^\infty((0,T]\times M)$, for some $T > 0$.
\end{cor}

\def\cprime{$'$}
\providecommand{\bysame}{\leavevmode\hbox to3em{\hrulefill}\thinspace}
\providecommand{\MR}{\relax\ifhmode\unskip\space\fi MR }
\providecommand{\MRhref}[2]{%
  \href{http://www.ams.org/mathscinet-getitem?mr=#1}{#2}
}
\providecommand{\href}[2]{#2}


\begin{thebibliography}{ACM11}


\bibitem[\textsc{Alb07}]{Alb:RIT}
\textsc{P. Albin} \emph{A renormalized index theorem for some complete asymptotically 
regular metrics: the Gauss Bonnet theorem}, Adv. Math. \textbf{213} (2007), 
no. 1, 1-52. \MR{2331237 (2008h:58043)}

\bibitem[\textsc{BDV11}]{BDV:THE}
\textsc{E.~Bahuaud}, \textsc{E.~Dryden}, and \textsc{B.~Vertman} \emph{Mapping properties of the heat
  operator on edge manifolds}, preprint arXiv:1105.5119v1 [math.AP] (2011).

\bibitem[\textsc{BaHe11}]{BahDyl}
\textsc{E.~Bahuaud} and \textsc{D. Helliwell} \emph{Short time existence for some higher order geometric flows},
to appear in Comm. PDE, arXiv:1010.4287 [math.DG] (2010)

\bibitem[\textsc{BaVe11}]{BahVer:YFO}
\textsc{E.~Bahuaud} and \textsc{B.~Vertman} \emph{Yamabe flow on edge manifolds}, 
preprint arXiv:1107.5350 [math.AP] (2011).


\bibitem[\textsc{CaMu95}]{CaMu95} \textsc{L. A. Caffarelli} and 
\textsc{N. E. Muler} \emph{An $L^1$ bound for solutions of the Cahn-Hilliard equation}, 
Arch. Rational Mech. Anal. \textbf{133}, 129–144 (1995).

\bibitem[\textsc{Cah61}]{Cah61} \textsc{J. W. Cahn} 
\emph{On spinodal decomposition}, Acta Metallurgica \textbf{9}, 795-801 (1961)

\bibitem[\textsc{CaHi58}]{CaHi58} \textsc{J. W. Cahn} and \textsc{J. E. Hilliard} 
\emph{Free energy of a non-uniform system I. Interfacial free energy},
J. Chem. Phys. \textbf{28}, 258-267 (1958)
 

\bibitem[\textsc{Co59}]{Co:LTD}
\textsc{S. Colombo}, \emph{Les transformations de Mellin et de Hankel}. Centre National
de la Recherche Scientifique, Paris, (1959)


\bibitem[\textsc{ElSo86}]{ElSo86} \textsc{C. Elliott} and \textsc{Z. Songmu} \emph{On the Cahn-Hilliard equation}, 
Arch. Rational Mech. Anal. \textbf{96}, 339--357 (1986).

\bibitem[\textsc{FeGr84}]{FG85} \textsc{C. Fefferman} and \textsc{C. R. Graham}
\emph{Conformal invariants}, Asterisque, no. Numero Hors Serie, 95?116, 
The mathematical heritage of Elie Cartan (Lyon, 1984).


\bibitem[\textsc{JL03}]{JefLoy:RSH}
\textsc{T. Jeffres} and \textsc{P. Loya}, \emph{Regularity of solutions of the heat equation on a cone}, 
Int. Math. Res. Not.  2003,  no. 3, 161--178. \MR{1932532 (2003i:58043)}

\bibitem[\textsc{Lam01}]{Lam01} T. Lamm
\emph{Biharmonischer W\"armefluss}, Diplomarbeit. Albert-Ludwigs-Universit\"at Freiburg (2001)

\bibitem[\textsc{Les97}]{Les:OFT}
\textsc{M.~Lesch}, \emph{Operators of {F}uchs type, conical singularities, and
  asymptotic methods}, Teubner-Texte zur Mathematik [Teubner Texts in
  Mathematics], vol. 136, B. G. Teubner Verlagsgesellschaft mbH, Stuttgart,
  1997. \MR{1449639 (98d:58174)}

\bibitem[\textsc{Maz91}]{Maz:ETD}
\textsc{R. Mazzeo}, \emph{Elliptic theory of differential edge operators. I.} Comm. Partial Differential Equations  \textbf{16}  (1991), no. 10, 1615--1664. \MR{1133743 (93d:58152)}

\bibitem[\textsc{MaVe12}]{MazVer:ATM}
\textsc{R. Mazzeo} and \textsc{B. Vertman}, \emph{Analytic torsion on manifolds with edges}, Adv. Math. \textbf{231} (2012), no. 2, 1000--1040.

\bibitem[\textsc{Mel93}]{Mel:TAP}
\textsc{R. Melrose} \emph{The Atiyah-Patodi-Singer index theorem} Research Notes in Math., Vol. 4, A K Peters, Massachusetts (1993)


\bibitem[\textsc{RoSc12}]{RoSc12} \textsc{N. Roidos} and \textsc{E. Schrohe} 
\emph{The Cahn-Hilliard Equation and the Allen-Cahn Equation on Manifolds with Conical Singularities}, 
preprint on arXiv:1203.3111v2 [math.AP] 

\bibitem[\textsc{Ver14}]{Ver-Mooers} 
\textsc{B. Vertman}, \emph{Heat-trace asymptotics for edge Laplacians with algebraic 
boundary conditions}, arXiv:1301.7290 [math.SP], accepted, to appear in Journal d?Analyse Mathematique (2014)

\bibitem[\textsc{Ver08}]{Ver:FDF} 
\textsc{B. Vertman}, \emph{Functional determinants for regular-singular Laplace-type operators}, (2008)

\bibitem[\textsc{Tay96}]{Tay:PDE}
\textsc{M.~E. Taylor}, \emph{Partial differential equations. {III}. Nonlinear Equations}, Applied
  Mathematical Sciences, vol. 117, Springer-Verlag, New York, 1997, Basic
  theory. 

\end{thebibliography}
\end{document}